\tikzset{
  commutative diagrams/.cd, 
  arrow style=tikz, 
  diagrams={>=stealth}
}
\theoremstyle{definition}
\newenvironment{customthm}[1]
  {\innercustomthm}
  {\endinnercustomthm}
\def\@tocline#1#2#3#4#5#6#7{\relax
  \ifnum #1>\c@tocdepth 
  \else
    \par \addpenalty\@secpenalty\addvspace{#2}%
    \begingroup \hyphenpenalty\@M
    \@ifempty{#4}{%
      \@tempdima\csname r@tocindent\number#1\endcsname\relax
    }{%
      \@tempdima#4\relax
    }%
    \parindent\z@ \leftskip#3\relax \advance\leftskip\@tempdima\relax
    \rightskip\@pnumwidth plus4em \parfillskip-\@pnumwidth
    #5\leavevmode\hskip-\@tempdima
      \ifcase #1
       \or\or \hskip 1em \or \hskip 2em \else \hskip 3em \fi%
      #6\nobreak\relax
    \dotfill\hbox to\@pnumwidth{\@tocpagenum{#7}}\par
    \nobreak
    \endgroup
  \fi}
\DeclareRobustCommand{\cev}[1]{%
  \mathpalette\do@cev{#1}%
}
\newcommand{\do@cev}[2]{%
  \fix@cev{#1}{+}%
  \reflectbox{$\m@th#1\vec{\reflectbox{$\fix@cev{#1}{-}\m@th#1#2\fix@cev{#1}{+}$}}$}%
  \fix@cev{#1}{-}%
}
\newcommand{\fix@cev}[2]{%
  \ifx#1\displaystyle
    \mkern#23mu
  \else
    \ifx#1\textstyle
      \mkern#23mu
    \else
      \ifx#1\scriptstyle
        \mkern#22mu
      \else
        \mkern#22mu
      \fi
    \fi
  \fi
}
\newcounter{marginnote}
\DeclareMathAlphabet{\mathpzc}{OT1}{pzc}{m}{it}
\definecolor{sebdarkgreen}{rgb}{0.019,0.317,0.149}
\definecolor{sebgreen}{RGB}{36,157,55}
\definecolor{seblightgreen}{rgb}{0.784,0.952,0.780}
\definecolor{sebblue}{RGB}{0,123,194}
\definecolor{seblightblue}{RGB}{194,242,255}
\theoremstyle{definition}
\newtheorem{theorem}{Theorem}[section]
\newtheorem*{theorem*}{Theorem}
\newtheorem{conjecture}[theorem]{Conjecture}
\newtheorem*{conjecture*}{Conjecture}
\newtheorem{corollary}[theorem]{Corollary}
\newtheorem{lemma}[theorem]{Lemma}
\newtheorem{proposition}[theorem]{Proposition}
\newtheorem{remark}[theorem]{Remark}
\newtheorem*{runningexample*}{Running example}
\newtheorem*{aside*}{Aside}
\newtheorem{definition}[theorem]{Definition}
\newtheorem{example}[theorem]{Example}
\newtheorem{proposition-definition}[theorem]{Proposition-Definition}
  \theoremstyle{theorem}
\newenvironment{customconj}[1]
  {\innercustomconj}
  {\endinnercustomconj}
  \theoremstyle{theorem}
\newenvironment{customcor}[1]
  {\innercustomcor}
  {\endinnercustomcor}
\DeclareMathOperator{\Pic}{Pic}
\newcommand{\NN}{\mathbf{N}}
\newcommand{\Gm}{\mathbb{G}_{\rm{m}}}
\newcommand{\Glog}{\mathbb{G}_{\rm{log}}}
\newcommand{\Gtrop}{\mathbb{G}_{\rm{trop}}}
\newcommand{\lbb}{[\![}
\newcommand{\rbb}{]\!]}
\newcommand{\ol}[1]{\overline{#1}}
\newcommand{\bcd}{\begin{center}\begin{tikzcd}}
\newcommand{\ecd}{\end{tikzcd}\end{center}}
\newcommand{\Aaff}{\mathbb{A}}
\newcommand{\PP}{\mathbb{P}}
\newcommand{\OO}{\mathcal{O}}
\renewcommand{\NN}{\mathbb{N}}
\newcommand{\ZZ}{\mathbb{Z}}
\newcommand{\kfield}{\bm{k}}
\newcommand{\dvr}{\Delta}
\newcommand{\Lcal}{\mathcal{L}}
\newcommand{\Mcal}{\mathcal{M}}
\newcommand{\Hcal}{\mathcal{H}}
\newcommand{\Bcal}{\mathcal{B}}
\newcommand{\Fcal}{\mathcal{F}}
\newcommand{\Ccal}{\mathcal{C}}
\newcommand{\oCcal}{\overline{\mathcal{C}}}
\newcommand{\Pcal}{\mathcal{P}}
\newcommand{\Tau}{T}
\newcommand{\oPcal}{\overline{\mathcal{P}}}
\newcommand{\oPsi}{\bar\Psi}
\newcommand{\Qcal}{\mathcal{Q}}
\newcommand{\blambda}{\bar{\lambda}}
\newcommand{\Mbar}{\ol{\Mcal}}
\newcommand{\oP}{\overline{P}}
\newcommand{\val}{\text{val}}
\newcommand{\Spec}{\operatorname{Spec}}
\NewDocumentCommand{\compatibilitydatum}{m m m m m m O{} O{} O{}}{
\begin{equation*} \begin{tikzcd}[ampersand replacement=\&]
  \: \arrow{r} \& {#1} \arrow{r} \arrow{d}{#7} \& {#2} \arrow{r} \arrow{d}{#8} \& {#3} \arrow{r}{[1]} \arrow{d}{#9} \& \: \\
  \: \arrow{r} \& {#4} \arrow{r} \& {#5} \arrow{r} \& {#6} \arrow{r} \& \:
\end{tikzcd} \end{equation*}}
\NewDocumentCommand{\commutingsquare}{m m m m o O{} O{} O{} O{}}{
\begin{equation}\begin{tikzcd}[ampersand replacement=\&] \label{#5}
  #1 \arrow{r}{#6} \arrow{d}{#7} \& #2 \arrow{d}{#8} \\
  #3 \arrow{r}{#9} \& #4
\end{tikzcd}\IfValueTF{#5}{\label{#5}}{} \end{equation}}
\NewDocumentCommand{\cartesiansquare}{m m m m O{} O{} O{} O{}}{
\begin{equation*}\begin{tikzcd}[ampersand replacement=\&]
  #1 \arrow{r}{#5} \arrow{d}{#6} \arrow[dr, phantom, "\square"] \& #2 \arrow{d}{#7} \\
  #3 \arrow{r}{#8} \& #4
\end{tikzcd} \end{equation*}}
\NewDocumentCommand{\cartesiansquarelabel}{m m m m m O{} O{} O{} O{}}{
\begin{tikzcd}[ampersand replacement=\&]
  #1 \arrow{r}{#6} \arrow{d}{#7} \arrow[dr, phantom, "\square"] \& #2 \arrow{d}{#8} \\
  #3 \arrow{r}{#9} \& #4
\end{tikzcd}\IfValueTF{#5}{\label{#5}}{}
}
\NewDocumentCommand{\triangleofspaces}{m m m O{} O{} O{}}{
\begin{tikzcd} [ampersand replacement=\&]
#1 \arrow{r}{#4} \arrow[bend right]{rr}{#5} \& #2 \arrow{r}{#6} \& #3
\end{tikzcd}}
\newcommand{\on}{\operatorname}
\newcommand{\oC}{\overline{C}}
\newcommand{\opsi}{\overline{\psi}}
\newcommand{\omd}{^{\omega}}
\newcommand{\homs}{\mathcal{H}om}
\begin{document}
 
\title{Hyperelliptic Gorenstein curves and logarithmic differentials}

\author{Luca Battistella}
\address{Institut für Mathematik, Humboldt-Universität zu Berlin, Germany}
\email{luca.battistella@hu-berlin.de}

\author{Sebastian Bozlee}
\address{Tufts University, Medford, MA, United States of America}
\email{Sebastian.Bozlee@tufts.edu}

\begin{abstract}
We produce a flexible tool for contracting subcurves of logarithmic hyperelliptic curves, which is local around the subcurve and commutes with arbitrary base-change. As an application, we prove that hyperelliptic multiscale differentials determine a sequence of Gorenstein contractions of the underlying nodal curve, whose dualising bundle they descend to generate. This is the first piece of evidence for a more general conjecture about limits of differentials.
\end{abstract}

\maketitle
\setcounter{tocdepth}{1}
\tableofcontents

\section*{Introduction}
Moduli spaces of differentials on Riemann surfaces have undergone wide and deep investigation at the interface between dynamics, topology, and algebraic geometry \cite{EMM,Filip}.

Various questions in Teichm\"uller theory can be interpreted in intersection-theoretic terms on a compact moduli space \cite{Mirzakhani,CMSZ}. In order to compactify strata of differentials, the curve $C$ should be allowed to degenerate: in the limit a smooth curve can become nodal, and the differential $\eta$ can vanish on a subcurve $C_{<0}$. Rescaling the differential appropriately, though, it is possible to extract more information, namely a meromorphic differential $\eta_{<0}$ on the subcurve $C_{<0}$ (possibly vanishing on a subcurve $C_{<-1}$, and so on). The dual graph of the nodal curve appears thus to be aligned by the generic vanishing order of the differential. Moreover, the order of zeroes and poles of the various meromorphic differentials define a conewise-linear function $\lambda$ with integer slopes on the dual graph. This is, roughly speaking, a \emph{generalised multiscale differential} \cite{BCGGM}. See also \cite{Gendron,FarkasPandharipande} for different approaches to compactifying strata of differentials.

Moduli spaces of generalised multiscale differentials are typically not irreducible, and the locus of smoothable differentials has been characterized in terms of the so-called \emph{global residue condition}: a zero-sum condition on residues of the differential at poles belonging to different irreducible components of the curve, which are connected through components at higher levels \cite{BCGGM16}.

The compactification is intrinsically logarithmic \cite{Chen2,Tale}. The conewise linear function is indeed a section of the characteristic sheaf of a log structure on the curve. It is a tropical canonical differential in the sense that it belongs to the tropical canonical linear series. Even for these purely combinatorial data, the moduli space is not in general irreducible (nor pure-dimensional); the locus of smoothable (\emph{realisable}) tropical differentials has been described explicitly in \cite{MUW}.

With the logarithmic approach providing a purely algebraic point of view on multiscale differentials, identifying the main component is the only outstanding problem towards a characteristic-free understanding of moduli spaces of differentials. We state a conjecture, originally due to D. Ranganathan and J. Wise, to the effect that smoothable differentials should be exactly those descending to a sequence of Gorenstein contractions of the curve.

\begin{customconj}{G}[$\approx$ {Conjecture~\ref{conj}}] \label{conjG}\leavevmode
Let $(C,\eta)$ be a generalised multiscale/log rubber differential (up to scaling), 
and let $\blambda$ denote its tropicalization. Then $\eta$ is smoothable if and only if \begin{enumerate}[label=(\roman*)]
    \item every level truncation $\lambda_i$ of $\blambda$ (as in \S \ref{sec:levels}) is a realisable tropical differential;
    \item there exists a reduced Gorenstein contraction $\sigma\colon C\to\oC_i$ such that $\sigma^*\omega_{\oC_i}=\omega_C(\lambda_i)$;
    \item the differential $\eta_i$ at level $i$ descends to a local generator of $\omega_{\oC_i}$. 
\end{enumerate}
\end{customconj}

The conjecture is motivated by work on stable maps \cite{RSPW1,RSPW2,BCM,BNR1,Wanlong,BatCar,BCquartics}. The connection between Gorenstein singularities and the (algebraic and tropical) geometry of differentials was first  evidenced in \cite{Bat19}. It appears from these works (on curves of genus one and two) that Brill--Noether theory, intended as the study of special linear series on curves, plays a key role in the construction of alternative compactifications of the moduli space of curves, embedded or not. In this paper, we explore this connection in the more general framework of hyperelliptic curves, and study Conjecture \ref{conjG} in this special case. In forthcoming work we will present applications of our construction to the birational geometry of the moduli space of hyperelliptic curves 
\cite{SmythTowards,Smyth,Fedorchuk,Bat19,BKN, BarrosMullane,BlankersBozlee}.

Strata of differentials are known to have at most three connected components \cite{KontsevichZorich}. One of them parametrises \emph{hyperelliptic differentials}, i.e. differentials on hyperelliptic curves that are anti-invariant under the hyperelliptic involution. Even after compactifying, this component is already irreducible \cite[\S 5]{Chen2}, hence the above conjecture postulates that every hyperelliptic multiscale differential should come from a Gorenstein contraction. This is indeed what we prove; the bulk of the paper consists of the construction of such a contraction. The combinatorial data we need is a cutoff of the tropicalisation of the hyperelliptic differential, which we call a \emph{contraction datum}.  Note that these tropical differentials come from the target of the admissible cover, and are therefore automatically realisable. We prove the following:
\begin{customthm}{A}[$=$ {Theorem \ref{thm:construction}}]
  Let $(\psi\colon C\to P,\lambda)$ be a log hyperelliptic admissible cover of genus $g$ with a contraction datum. There exists a 
  commutative diagram
  \bcd
C\ar[r,"\sigma"]\ar[d,"\psi"] & \oC\ar[d,"\opsi"]\\
P\ar[r,"\tau"] & \oP
\ecd
such that
  \begin{enumerate}[label=(\roman*)]
   \item  $\tau$ is a contraction to a rational, reduced, Cohen--Macaulay curve $\oP$;
   \item $\oC$ is a (not-necessarily reduced) Gorenstein curve of genus $g$ such that $\sigma^*\omega_{\oC}=\omega_C(\lambda)$;
   \item $\opsi$ is a two-to-one cover, and the quotient of a hyperelliptic involution $\bar\iota\colon\oC\to\oC$. 
  \end{enumerate}
 Moreover, the construction commutes with arbitrary base-change.
 \end{customthm}
Note that constructing the contraction\footnote{A contraction is a surjective morphism of curves $\phi\colon C\to D$ which is an isomorphism outside an exceptional subcurve of $C$, whose connected components are contracted to curve singularities of the same genus in $D$ \cite[\S 2.2]{SmythTowards}. Thus, technically, $\sigma$ is not a contraction when $\oC$ is not reduced.} $\sigma\colon C\to\oC$ directly is problematic: the naive strategy of taking $\on{Proj}(\omega_C(\lambda))$ does not behave well under base-change, and does not in general produce a flat family. One solution has been put forward in \cite{Boz21}, by presenting $\OO_{\oC}$ directly in terms of logarithmic data. Here we pursue a similar strategy, by first contracting the target of the admissible cover---which, being rational, does not present any issues---and then reconstructing $\oC$ as a double cover of the rational, not necessarily Gorenstein curve $\oP$: the cover ``cures'' the failure of $\oP$ to be Gorenstein by building in the structure sheaf all the superabundant differentials. This suggests perhaps that, although restricting to Gorenstein curves is very helpful with the deformation theory of curves, more general Cohen--Macaulay curves may appear quite naturally when looking at covers and other types of maps from curves, see for instance \cite{Heinrich}. 

In \S\ref{sec:local}, we perform a local study of the singularities arising from our construction, including their explicit equations and their dualising bundle.

Next, we proceed to show that our construction is very general: indeed, we have
\begin{customthm}{B}[$=$ {Theorem \ref{thm:converse}}]
    All smoothable, in particular all reduced, hyperelliptic Gorenstein curves arise from the above construction.
\end{customthm}

Finally, in Section \ref{sec:GRC}, we explain the conjectural relation between smoothable differentials and Gorenstein curves. In the presence of a multiscale differential, there is a natural way to log modify the curve $C$, so that a twist of the dualising bundle is trivial on higher levels. This simultaneously avoids non-reduced components arising in the Gorenstein contractions. In the special case of hyperelliptic log differentials, we prove that they always descend to the Gorenstein contractions associated to the cutoffs of their tropicalisations.

\begin{customcor}{C}[$=$ {Proposition \ref{prop:hypdiff}}]
Conjecture \ref{conjG} holds when $C$ is a hyperelliptic curve, and $\eta$ is anti-invariant with respect to the hyperelliptic involution.
\end{customcor}

\subsection*{Conventions} We work throughout over $\on{Spec}(\ZZ[\frac{1}{2}])$.

\subsection*{Acknowledgements}  This project has benefitted from conversations with Dan Abramovich, Francesca Carocci, Dawei Chen, Qile Chen, David Holmes, Scott Mullane, Navid Nabijou, Dhruv Ranganathan, David Smyth, Martin Ulirsch, and Jonathan Wise.

\subsection*{Funding} L.B. has received funding from the Deutsche Forschungsgemeinschaft  (DFG, German Research Foundation) under Germany’s Excellence Strategy EXC-2181/1 - 390900948 (the Heidelberg STRUCTURES Cluster of Excellence) and TRR 326 \emph{Geometry and Arithmetic of Uniformized Structures}, project number 444845124; and from the ERC Advanced Grant SYZYGY of the European Research Council (ERC) under the European Union Horizon 2020 research and innovation program (grant agreement No. 834172).

\section{Logarithmic hyperelliptic curves and differentials}

\subsection{Hyperelliptic admissible covers} Fix a genus $g\geq 2$ and a number of markings $n\geq 0$. Following \cite[Definition 3.5]{Mochizuki}, we will consider families of hyperelliptic admissible covers adapted to the setting of log schemes, together with additional markings at which differentials will later be permitted zeros and poles. We also keep track of the $2g + 2$ points of ramification and branching. Unlike the zeros and poles of differentials, these are not distinguished from each other and should be allowed to exchange in families.

Parallel to \cite[Definition 3.4]{Mochizuki}, we observe that the stack $\mathfrak{M}^{\rm log}_{g,n_1 + n_2}$ of prestable log curves of genus $g$ with $n_1 + n_2$ markings admits a natural action of the symmetric group on $n_1$ letters $\mathfrak{S}_{n_1}$ permuting the first $n_1$ points. The stack quotient $\mathfrak{MS}^{\rm log}_{g,n_1|n_2} \coloneqq \mathfrak{M}^{\rm log}_{g,n_1 + n_2} / \mathfrak{S}_{n_1}$ parametrizes prestable log curves $C$ with a simple divisor $\mathbf{r}$ of $n_1$ ``symmetrized markings" and $n_2$ ordered markings $u_1, \ldots, u_{n_2}$ (collectively $\mathbf{u}$). In order to capture the correct log structure we define such a family to be a pullback of the universal curve of $\mathfrak{MS}^{\rm log}_{g,n_1|n_2}$. We stress that the log structure on the base $S$ is not required to be minimal.

\begin{definition}
A \emph{family of log curves $(C, \mathbf{r}, \mathbf{u})$ of genus $g$ with $n_1$ symmetrized markings and $n_2$ ordered markings} over a log scheme $S$ is a pullback in the category of fs log schemes of the universal curve $\mathfrak{CS}_{g,n_1|n_2}$ of $\mathfrak{MS}^{\rm log}_{g,n_1|n_2}$ along a morphism $S \to \mathfrak{MS}^{\rm log}_{g,n_1|n_2}.$
\end{definition}



\begin{definition}
 A family of $n$-marked \emph{log hyperelliptic admissible covers} of genus $g$ over a log scheme $S$ consists of 
 \begin{enumerate}
   \item families $(C,\mathbf{r},\mathbf{u}) \in \mathfrak{MS}^{\rm log}_{g,2g + 2|2n}(S)$ and $(P,\mathbf{b},\mathbf{v}) \in \mathfrak{MS}^{\rm log}_{0,2g+2|n}(S)$
   \item a finite, fiberwise generically degree 2, Kummer log \'etale morphism $\psi\colon C \to P$ over $S$
 \end{enumerate}
 such that
 \begin{enumerate}
   \item $\psi$ is ramified at $\mathbf{r}$ and possibly some nodes of $C$, and $\psi^{-1}(\mathbf{b}) = 2\mathbf{r}$;
   \item writing $u_{1,1}, u_{1,2}, \ldots, u_{n,1}, u_{n,2}$ for the markings making up $\mathbf{u}$ in $C$ and $v_1, \ldots, v_n$ for the markings making up $\mathbf{v}$ in $P$, we have that $\psi$ maps $u_{i,1}, u_{i,2}$ to $v_i$ for each $i = 1, \ldots, n$.
 \end{enumerate}
\end{definition}

\begin{remark}
More concretely, the map $\psi \colon C \to P$ takes one of the following forms strict \'etale locally in $P$:
\begin{enumerate}
  \item (unramified points) A strict, trivial double cover of a neighborhood of a smooth point, a marked point $v_i$, or a node with the usual log structure;
  \item (ramification over a point of $\mathbf{b}$) The map $\Spec \OO_S[\NN] \to \Spec \OO_S[\NN]$ induced by the multiplication by 2 map from $\NN \to \NN$;
  \item (ramification over a node)
  The map
  \[
    \Spec \OO_S[x,y]/(xy - t) \to \Spec \OO_S[z,w]/(zw - t^2)
  \]
  induced by $z \mapsto x^2, w \mapsto y^2$ and similarly $\log(z) \mapsto 2\log(x), \log(w) \mapsto 2\log(y)$ on log structures.
\end{enumerate}
\end{remark}

Given a family of log hyperelliptic admissible covers over $S$, there is an associated minimal logarithmic structure on $S$, which is a Kummer extension of that of $P$ as a log smooth curve, introducing square roots of the smoothing parameters corresponding to the nodes over which $\psi$ is ramified. Indeed, $\psi$ can be factored as $C\to [C/\iota]=:P^{\rm tw}\to P$, where the first map is strict \'etale, and the second one is Kummer log 
\'etale and birational, albeit it is only representable by DM stacks: $P^{\rm{tw}}$ is an orbicurve with (relative) coarse moduli $P$. 

We say that a log hyperelliptic admissible cover is stable if $(P,\mathbf b,\mathbf v)$ is Deligne--Mumford stable as a rational pointed curve. Notice that $(C,\mathbf r,\mathbf u)$ will be as well. We denote by $\Hcal_{g,n}$ the moduli space of genus $g$, $n$-marked, stable log hyperelliptic admissible covers. Then, $\Hcal_{g,n}$ is represented by a proper DM stack with log structure, which is furthermore (log) smooth \cite[\S 3]{Mochizuki}.

Given a log hyperelliptic admissible cover as above, there is a \emph{hyperelliptic involution} $\iota\colon C\to C$ over $\psi$ fixing $\mathbf r$ and swapping $u_{i,1}$ with $u_{i,2}$.

\subsection{Conewise-linear functions} The tropicalization of a log smooth curve is a family of tropical curves over the base. This is how piecewise-linear geometry and cone complexes enter the picture. As we shall see, they are useful for book-keeping and enrich the algebro-geometric toolbox with a number of combinatorial devices. In particular, conewise-linear (CL) functions on the tropicalization of $C$ are in bijection with global sections $\Gamma(C, \ol{M}_C)$ of the characteristic sheaf on $C$. For us, they will measure the order of vanishing of the logarithmic sections of a line bundle. We refer the reader to \cite{CavalieriChanUlirschWise} for a more careful introduction to these concepts. It is sometimes helpful to consider such CL functions as points of a geometric object.

\begin{definition}[{\cite[Definition 4.1]{MarcusWise}}]
The \emph{tropical multiplicative group} or \emph{tropical line} $\Gtrop$ is the functor on log schemes defined by
\[
  \Gtrop(X) = \Gamma(X, \ol{M}_X^{gp}).
\]
We give the points of $\Gtrop(X)$ a partial order by the rule $\alpha \leq \beta \iff \beta - \alpha \in \Gamma(X, \ol{M}_X)$.

Similarly, the \emph{logarithmic multiplicative group} $\Glog$ is the functor on log schemes defined by
\[
  \Glog(X) = \Gamma(X, M_X^{gp}).
\]
\end{definition}

There is a short exact sequence:
\[0\to\Gm\to\Glog\to\Gtrop\to0.\]
While $\Gtrop$ (and consequently $\Glog$) is not representable by an algebraic stack with log structure, it admits a log smooth cover by log schemes, see \cite[\S 2.2.7]{MolchoWise}. 

\subsection{Log differentials} A \emph{log twisted differential} $\eta$ on $(C,\mathbf u)$ \cite[\S 3]{Chen2} is a logarithmic section of (the total space of) the dualising sheaf $\omega_{C/S}$, where the latter is endowed with the log structure generated by the zero section and the pullback of the log structure of $C$.
\begin{remark}\label{rmk:twisted}
    A log twisted differential is equivalently a section of the fibre product:
    \bcd[ampersand replacement=\&]
\& S \ar[d,"{(C,\omega_{C/S})}"] \\
\mathbf{Div}\ar[r,"{\rm{aj}}"] \& {\mathcal{P}ic}
    \ecd
where the bottom arrow is defined in \cite[Definitions 4.2.1 and 4.3.1]{MarcusWise}, or a section of the log line bundle associated to the $\Gm$-torsor underlying $\omega_{C/S}$ under the extension $\Gm\to\Glog$.
\end{remark}
Even when $C$ is smooth, $C$ must be endowed with non-trivial log structure at the zeroes of $\eta$, which we henceforth take to be (a subset of) the markings $\mathbf u$ above (independent of $C$ being smooth or nodal). The multiplicity $m_i$ of $u_i$ as a zero of the differential is encoded in the logarithmic enhancement of the $\eta$, and is locally constant in families; thus, it makes sense to fix a non-negative integer partition $\mu=(m_1,\ldots,m_n)$ of $2g-2$ from the beginning. Moreover, the tropicalization of $\eta$ is a conewise-linear function $\blambda$ on the tropicalization $\Gamma$ of $C$, measuring the order of vanishing of the differential at the generic point of each component of $C$. 

Components of $C$ on which $\eta$ vanishes are sometimes referred to as \emph{degenerate}.
Now assume for simplicity that $S$ is a log point. Importantly, the log differential $\eta$ also contains the information of a meromorphic differential $\eta_v$ on every component $C_v$ of $C$---even the degenerate ones---whose order of zero/pole at the special points of $C_v$ is governed by $\blambda$ (see \cite[\S 3.2]{Chen2}). We stress that the order of zero/pole of $\eta_v$ on either side of a node differs from the slope of $\blambda$ by $1$, due to the fact that $\omega_C$ restricts on $C_v$ to the sheaf of differentials with logarithmic poles at the nodes. We can indeed write:
\begin{equation}\label{eqn:twisted_diff}
 \omega_C=\OO_C(\blambda).
\end{equation}

Notice that here $\blambda$ has slope $m_i$ on the leg corresponding to $u_i$; had we set this slope to $0$, we could have written:
\[\omega_C(-\sum m_iu_i)=\OO_C(\blambda_{\rm vert}).\]

\subsection{Hyperelliptic differentials} We recall \cite[Definition 5.3]{Chen2}.
\begin{definition}
A \emph{log twisted hyperelliptic differential} over a log scheme $S$ is the datum of
\begin{enumerate}[label=(\roman*)]
 \item a log hyperelliptic admissible cover $\psi\colon(C,\mathbf r,\mathbf u)\to(P,\mathbf b,\mathbf v)$ over $S$, and
 \item a log twisted differential $\eta$ on $C$, such that
\end{enumerate}
 \begin{equation}\label{eqn:involution}
  \iota^*(\eta)=-\eta,
 \end{equation}
where $\iota$ is the hyperelliptic involution. 
\end{definition}

\begin{remark}\label{rmk:antiinvariant1}
    Let $C$ be a smooth hyperelliptic curve. Then, every (global holomorphic) differential on $C$ is $\iota$-anti-invariant. This follows, for instance, from the well-known fact that if an affine patch of $C$ is written as $\{y^2=p(x)\}\subseteq\Aaff^2$ (with $p$ a square-free polynomial of degree $2g+2$), then $\iota$ acts as $(x,y)\mapsto(x,-y)$, and a basis of the space of differentials on $C$ is given by $\{\frac{\on{d}\!x}{y},\ldots,x^{g-1}\frac{\on{d}\!x}{y}\}$ (compatibly with the Riemann--Hurwitz formula $\omega_C=\psi^*\omega_P(\frac{1}{2}\mathbf b)$). Alternatively, any $\iota$-invariant differential descends to $\PP^1$, and must therefore be trivial. We will see a generalisation of this statement in Remark \ref{rmk:antiinvariant2}. It follows that limits of Abelian differentials on smooth elliptic curves are anti-invariant log twisted differentials.
\end{remark}

Given the latter condition, the vanishing orders of $\eta$ at the conjugate points $u_{i,1}$ and $u_{i,2}$ are the same. We may therefore denote the vanishing order of $\eta$ by an $n$-tuple $\mu=(m_1,\ldots,m_n)\in \NN^n_{g-1}$ of non-negative integers summing to $g-1$. We restrict our attention to strata of hyperelliptic differentials with no zeroes at the Weierstrass points.\footnote{Differentials with zeroes (of even multiplicity $2m'$) at a Weierstrass point arise in the boundary of these spaces when a marking $v$ of contact order $m'$ moves off to a trivial bubble together with a single branch point (the node is then also branching).}

\subsection{Balancing} We now give a description of the slopes of the tropicalization $\blambda$ of $\eta$. We will use this later to describe contraction data in terms of CL functions that ``locally look like" $-\blambda.$ The upshot is that, in the hyperelliptic case, the triviality of an appropriate twist of the canonical bundle can be measured numerically.

Note that condition \eqref{eqn:involution} implies that the tropicalization $\blambda$ descends to the quotient of the tropicalization $\Gamma$ of $C$ by $\on{trop}(\iota)$, that we denote by $\Pcal$. It is the tropicalization of the orbicurve $P^{\rm tw}:=[C/\iota]$, whose coarse moduli (relative to $S$) is $P$. Thus, $\Pcal$ admits a morphism to the tropicalization $\Tau$ of $P$, which is a Kummer extension of the edges corresponding to branching nodes of $P$. 
Let $\blambda_T$ denote the CL function on $\Pcal$ pulling back to $\blambda$ on $\Gamma$. Very concretely, the previous discussion implies that we may view $\blambda_T$ as a CL function on $T$, except that it may have half-integral slopes on the branching edges. Equation \eqref{eqn:twisted_diff} descends then to:
\begin{equation}\label{eqn:twisted_diff_P}
 \omega_{P^{\rm tw}}=\OO_{P^{\rm tw}}(\blambda_T).
\end{equation}
Indeed, $C\to P^{\rm tw}$ is \'etale, whereas $\rho\colon P^{\rm tw}\to P$ satisfies (compare with \cite[Proposition 2.5.1]{Chiodo}) $\omega_{P^{\rm tw}}=\rho^*\omega_P(\frac{1}{2}\mathbf b)$. Since $\rho$ induces an isomorphism of Picard groups up to torsion, and since $P$ is rational, (a multiple of) condition \eqref{eqn:twisted_diff_P} can be checked numerically, and becomes:
\begin{equation}\label{eqn:balancing}
 \val(v)-2+\frac{1}{2}\deg(\mathbf b)(v)-\on{div}(\blambda_T)(v)=0,
\end{equation}
for every vertex $v$ of $\Tau$. Here, $\val$ denotes the edge valency, $\deg(\mathbf b)$ the multi-degree (or tropicalization) of the branch divisor, and $\on{div}(\blambda_T)(v)$ the sum of the outgoing slopes of $\blambda_T$ along all the edges adjacent to $v$. We may as well modify $\blambda_T$ to $\tilde\lambda_T$ by making its slope $-\frac{1}{2}$ on all the legs corresponding to $\mathbf b$, so that the previous equation now reads:
\begin{equation}\label{eqn:balancing_tilde}
 \val(v)-2=\on{div}(\tilde\lambda_T)(v).
\end{equation}

\subsection{Rubber differentals and level graphs}\label{sec:levels} 
For our application to the moduli space of differentials, we are going to require the following technical adaptation.
If we impose that $\blambda$ (or, what is the same, $\blambda_T$, $\blambda_{\rm vert}$, or $\tilde\lambda_T$) as a map from $\Gamma$ to $\Gtrop$  has totally ordered image, and that the pullback of the corresponding subdivision of $\Gtrop$ to $C$ still is a log curve,\footnote{This amounts to requiring that $\eta\colon C\to\omega_C$ is a logarithmic stable map in the sense of Kim \cite{Kim}.} this allows us to identify $\eta$ with a \emph{logarithmic rubber differential} in the sense of \cite[Definition 2.7]{Tale}, or equivalently, according to the same paper, a \emph{generalised multiscale differential} \cite{BCGGM}.
\begin{remark}
In the language of \cite{MarcusWise}, this is equivalent to replacing $\mathbf{Div}$ with $\mathbf{Rub}$ in the diagram of Remark \ref{rmk:twisted}.
\end{remark}

We can now use $-\blambda$ (resp. $-\blambda_T$) to make $\Gamma$ (resp. $\Tau$) into an \emph{enhanced level graph}, as explained in \cite[\S 5.1]{Tale}, translating it in particular so that its maximum value is $0$. We denote by $X_{\boxtimes i}$ the (possibly disconnected) subcurve of $X$ at level $\boxtimes i$ ($i=0,\ldots,-N$), where $X$ is either $C,\Gamma,P,$ or $\Tau$, and $\boxtimes$ is either $<,\leq,=,\geq,$ or $>$; we write (notice the sign change!) \[\lambda_i=\max\{-\blambda-i,0\},\]
and similarly for $\blambda_T$. Notice that $\lambda_i$ is in the tropical canonical linear series of $\Gamma_{\geq i}$, and in fact $\omega_C(\lambda_i)$ is trivial on $C_{>i}$. In the hyperelliptic case, this can be deduced from the analogous statement for $P$.

In the following we extract the combinatorial content of a tropical hyperelliptic differential; this is all we need in the next section to construct a Gorenstein contraction of a log hyperelliptic admissible cover.
\begin{definition}
 Let $\psi\colon(C,\mathbf r,\mathbf u)\to(P,\mathbf b,\mathbf v)$ be a log hyperelliptic admissible cover. A \emph{contraction datum} is a CL function $\lambda_T$ on $\Pcal=\on{trop}([C/\iota])$ such that $\lambda_T(v)\geq 0$ for every vertex $v$ of $\Pcal$, and the balancing equation
 \begin{equation}\label{eqn:balancing_neg}
 \val(v)-2+\frac{1}{2}\deg(\mathbf b)(v)+\on{div}(\lambda_T)(v)=0,
\end{equation}
holds for every vertex $v$ of $\on{Supp}(\lambda_T)=\Pcal_{>0}$.
\end{definition}

\begin{remark}
Equation \eqref{eqn:balancing_neg} is obtained by negating the divisor of $\blambda_T$ in Equation \eqref{eqn:balancing}.
\end{remark}

A log rubber differential on $C\to S$ induces a minimal logarithmic structure on $S$: $\blambda_{\rm vert}$ induces a subdivision of $\Gtrop$ along the image of the vertices of $\Gamma$, and the tropical parameters are fractions of the lengths of the finite edges in this subdivision. A log differential is called stable if $C$ is. We denote the moduli space of stable log rubber  differentials by $\Hcal^\dagger(\mu)$, as in \cite{Chen2}. It is represented by a separated DM stack with locally free log structure, and is finite (in particular, representable) over the Hodge bundle over $\mathbf{Rub}$ (c.f. \cite[Corollary 3.2]{Chen2}), which is itself logarithmically \'etale over $\Mbar_{g,n}$ \cite[Proposition 5.1.2]{MarcusWise}. Typically, $\Hcal^\dagger(\mu)$ is not irreducible.

\begin{remark}
 Asking values of $\lambda_T$ to be comparable to $0$ can be thought of as taking a subdivision of the moduli space in the logarithmic sense. Various versions of this (e.g. requesting all values to be comparable) may produce less economic moduli spaces, that may though be better behaved from the point of view of deformation theory (cf. \cite[\S4.6]{RSPW1}).
\end{remark}

\begin{example}
To illustrate the combinatorics of contraction data, we consider the dual graphs of several log hyperelliptic admissible covers with $g = 2$ and $n = 1$, i.e. two simple zeroes at conjugate points. We first consider
the possible stable hyperelliptic admissible covers where $\Pcal$ has a single edge. We may attempt to construct a contraction datum supported on a single vertex $v$ by computing the required slope of $\lambda_T$ at $v$ using Equation \eqref{eqn:balancing_neg}, then shifting $\lambda_T$ so that its minimum value is 0. The possibilities are illustrated in Figure \ref{fig:contraction_datum_example_1}. We indicate the number of legs coming from $\mathbf{b}$ and the ramified edges (nodes) in blue, the legs coming from markings in black, and the positive slopes of $\lambda$, $\lambda_T$ in green. We recall that pulling back $\lambda_T$ to $\lambda$ doubles slopes on ramified edges.
 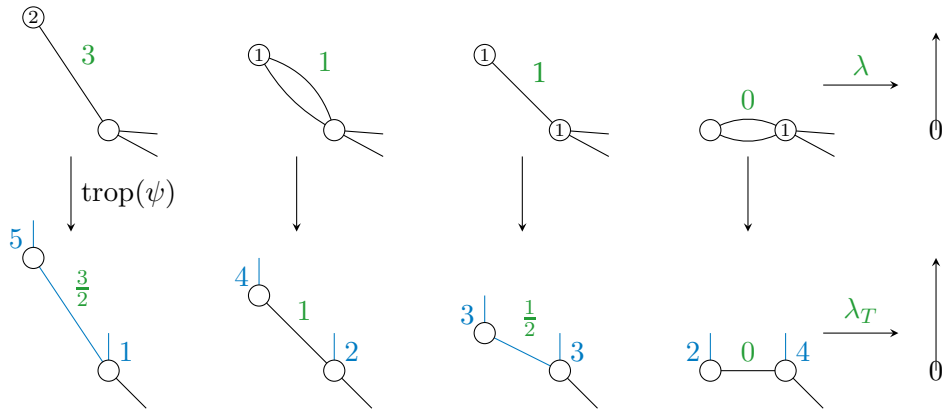
\begin{figure}[h]
  \begin{tikzpicture}

  \draw (0,4.7)--node[above right]{\color{sebgreen}{$3$}}(1,3.2);
  \draw (1,3.2)--(1.65,2.85) (1,3.2)--(1.65,3.15);
  \draw[fill=white] (0,4.7) circle (4pt) (1,3.2) circle (4pt);
  \node at (0,4.7) {\tiny$2$};

  \draw[-stealth] (0.5,2.85)--node[right]{$\on{trop}(\psi)$} (0.5,1.85);

  \draw[sebblue] (0,1.5)--(1,0);
  \draw (1,0)--(1.5,-0.5); 
  \draw[sebblue] (0,1.5)--node[left]{$5$}(0,2);
  \draw[sebblue] (1,0)--node[right]{$1$}(1,0.5);
  \draw[fill=white] (0,1.5) circle (4pt) (1,0) circle (4pt);
  \node at (0.65,1.1) {\color{sebgreen}{$\frac{3}{2}$}};


  \draw (3,4.2) to[out=-15,in=105] node[above right]{\color{sebgreen}{$1$}} (4,3.2);
  \draw (3,4.2) to[out=-60,in=150] (4,3.2);
  \draw (4,3.2)--(4.65,2.85) (4,3.2)--(4.65,3.15);
  \draw[fill=white] (3,4.2) circle (4pt) (4,3.2) circle (4pt);
  \node at (3,4.2) {\tiny$1$};

  \draw[-stealth] (3.5,2.85)-- (3.5,1.85);

  \draw (3,1)--(4,0);
  \draw (4,0)--(4.5,-0.5); 
  \draw[sebblue] (3,1)--node[left]{$4$}(3,1.5);
  \draw[sebblue] (4,0)--node[right]{$2$}(4,0.5);
  \draw[fill=white] (3,1) circle (4pt) (4,0) circle (4pt);
  \node at (3.6,.8) {\color{sebgreen}{$1$}};


  \draw (6,4.2)--node[above right]{\color{sebgreen}{1}} (7, 3.2);
  \draw (7,3.2)--(7.65,2.85) (7,3.2)--(7.65,3.15);
  \draw[fill=white] (6,4.2) circle (4pt) (7,3.2) circle (4pt);
  \node at (6,4.2) {\tiny$1$};
  \node at (7,3.2) {\tiny$1$};

  \draw[-stealth] (6.5,2.85)-- (6.5,1.85);

  \draw[sebblue] (6,0.5)--(7,0);
  \draw (7,0)--(7.5,-0.5); 
  \draw[sebblue] (6,0.5)--node[left]{$3$}(6,1);
  \draw[sebblue] (7,0)--node[right]{$3$}(7,0.5);
  \draw[fill=white] (6,0.5) circle (4pt) (7,0) circle (4pt);
  \node at (6.6,.7) {\color{sebgreen}{$\frac{1}{2}$}};


  \draw (9,3.2) to[out=30,in=150] node[above]{\color{sebgreen}{$0$}} (10, 3.2);
  \draw (9,3.2) to[out=-30,in=-150]  (10, 3.2);
  \draw (10,3.2)--(10.65,2.85) (10,3.2)--(10.65,3.15);
  \draw[fill=white] (9,3.2) circle (4pt) (10,3.2) circle (4pt);
  \node at (10,3.2) {\tiny$1$};

  \draw[-stealth] (9.5,2.85)-- (9.5,1.85);

  \draw (9,0)--node[above]{\color{sebgreen}{$0$}} (10,0);
  \draw (10,0)--(10.5,-0.5); 
  \draw[sebblue] (9,0)--node[left]{$2$}(9,0.5);
  \draw[sebblue] (10,0)--node[right]{$4$}(10,0.5);
  \draw[fill=white] (9,0) circle (4pt) (10,0) circle (4pt);

  \draw[-stealth] (10.5,.5)--node[above]{\color{sebgreen}{$\lambda_T$}} (11.5,.5);

  \draw[-stealth] (12,0)node{$0$}--(12,1.5);

  \draw[-stealth] (10.5,3.8)--node[above]{\color{sebgreen}{$\lambda$}} (11.5,3.8);

  \draw[-stealth] (12,3.2)node{$0$}--(12,4.5);
  \end{tikzpicture}

  \caption{Contraction data on hyperelliptic admissible covers with one edge.}
  \label{fig:contraction_datum_example_1}
 \end{figure}
Since Equation \eqref{eqn:balancing_neg} is stable under generization, we can deduce the slopes of contraction data on genus two admissible covers
with more components by generizing to the single-edge graphs. See Figures \ref{fig:contraction_datum_example_1} and \ref{fig:contraction_datum_example_2}.

\begin{figure}[h]
  \begin{tikzpicture}
  \draw (0,5.5) to[out=30,in=150]node[above]{\color{sebgreen}{$0$}} (1,5.5)
  (0,5.5) to[out=-30,in=-150] (1,5.5)
  (1,5.5)--node[above right]{\color{sebgreen}{$1$}} (2,4.5)
  (2,4.5) to[out=-15,in=105] node[above right]{\color{sebgreen}{$1$}} (3,3.5)
  (2,4.5) to[out=-75,in=165] (3,3.5) --node[above]{\color{sebgreen}{$0$}} (4,3.5);
  \draw (4,3.5)--(4.5,3) (4,3.5)--(4.65,3.3);
  \draw[fill=white] (0,5.5) circle (4pt) (1,5.5) circle (4pt) (2,4.5) circle (4pt) (3,3.5) circle (4pt) (4,3.5) circle (4pt);

  \draw[-stealth] (2,3)--node[right]{$\on{trop}(\psi)$}(2,2);

  \draw (0,1.5) --node[above]{\color{sebgreen}{$0$}} (1,1.5) (2,1) --node[above right]{\color{sebgreen}{$1$}} (3,0);
  \draw[sebblue] (0,1.5)--node[left]{$2$} (0,2)  (1,1.5)--(1,2) (2,1)--(2,1.5) (3,0)--(3,0.5) (4,0)--(4,0.5);
  \draw[sebblue] (1,1.5) --node[above right]{\color{sebgreen}{$\frac{1}{2}$}}(2,1)  (3,0) --node[above]{\color{sebgreen}{$0$}} (4,0);
  \draw (4,0)--(4.5,-0.5);
  \draw[fill=white] (0,1.5) circle (4pt) (1,1.5) circle (4pt) (2,1) circle (4pt) (3,0) circle (4pt) (4,0) circle (4pt);
  \end{tikzpicture}

  \caption{A contraction datum on a larger log hyperelliptic admissible cover supported on the left 3 vertices. Unnumbered blue legs are single branch legs.}
  \label{fig:contraction_datum_example_2}
 \end{figure}
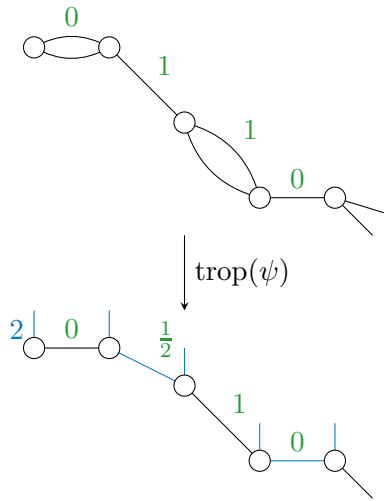

\end{example}

\section{Construction of Gorenstein contractions}\label{sec:construction}

Let now $(\psi\colon C\to P,\lambda_T)$ be a log hyperelliptic admissible cover over $S$ with a contraction datum.
Using $\lambda_T$, we will produce a hyperelliptic Gorenstein contraction $\oC$ of $C$. The strategy is to first contract $P$ to $\oP$, which is easier since $P$ is rational, and then construct $\oC$ as a double cover of $\oP$, by twisting the (horizontal) branch divisor $\mathbf b$ of $\psi$ with the (vertical) CL function $\lambda_T$. The dualising sheaf and its twists are seen to play a key role in connection with the Gorenstein condition. We stress that the double cover will not be flat whenever the dualising sheaf of $\oP$ is not a line bundle, and its branching locus will only be a generalised divisor.

\subsection{Contracting the rational curve} Consider the line bundle $\Lcal:=\omega_{P^{\rm tw}}(\lambda_T)$. 
Note that $\Lcal^{\otimes 2}$ descends to a line bundle $L^{\otimes2}=\omega_P^{\otimes2}(\mathbf b+2\lambda_T)$ of degree $2g-2$ and non-negative multi-degree on $P$. In particular, since $P$ is rational, $L$ has vanishing higher cohomology, $\pi_{P,*}L^{\otimes 2k}$ is a vector bundle of rank $2k(g-1)+1$ on $S$, and $L^{\otimes 2}$ is generated by global sections. Moreover, the formation of $\pi_{P,*}L^{\otimes 2k}$ commutes with arbitrary base-change by \cite[Theorem III.12.11]{HartshorneAG}. 
Let
\[P\xrightarrow{\tau}\oP:=\underline{\on{Proj}}_S\left(\bigoplus_{k\geq 0}\pi_{P,*}L^{\otimes 2k}\right)\]
be the resulting contraction. The fibres of $\tau$ are connected components of the locus where $L^{\otimes2}$ is trivial (i.e. of multi-degree $\underline 0$). In particular they are connected, rational, prestable curves. The map $\OO_P\to\OO_{\oP}$ induces an identification of the latter with $\on{R}\!\tau_*\OO_P$, and $\oP$ is a family of rational Cohen--Macaulay curves. In particular, the singularities appearing in the fibres of $\oP$ are ordinary $m$-fold points; they are Gorenstein (even l.c.i.) if and only if $m\leq2$. Let $\OO_{\oP}({\mathbbm 2})$ denote the line bundle on $\oP$ induced by $L^{\otimes2}$ on $T$.

\subsection{Square roots} 
We will need a square root of $\OO_{\oP}({\mathbbm 2})$ for our construction. Such a line bundle does not always exist on $\oP$ itself, but
after reintroduction of some stack structure on $P$ and $\oP$, we can find such a square root.

\begin{lemma} (c.f. \cite[Lemma 3.5]{Fedorchuk})
There is a commutative diagram
\[
\begin{tikzcd}
  P' \ar[d] \ar[r, "\tau'"] & \oP' \ar[d] \\
  P \ar[r, "\tau"] & \oP
\end{tikzcd}
\]
and unique line bundles $L' \in \Pic(P')$, $\OO_{\oPcal}({\mathbbm 1}) \in \Pic(\oP')$ with the following properties:
\begin{enumerate}
  \item $P'$ is a partial coarsening of $P^{\rm tw}$, while $\tau'$ is representable;
  \item $\OO_{\oP'}({\mathbbm 1})^{\otimes 2}$ is isomorphic to the pullback of $\OO_{\oP}({\mathbbm 2})$,
  \item $\tau^*\OO_{\oP'}({\mathbbm 1}) \cong L'$, and so $(L')^{\otimes 2}$ is isomorphic to the pullback of $L^{\otimes 2}$;
   \item $v_i^*(L'|_{P^{\rm tw}}) \cong v_i^*\Lcal$ for each $i = 1, \ldots, n$.
\end{enumerate}
\end{lemma}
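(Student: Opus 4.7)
The strategy is to construct $P'$ as a partial coarsening of $P^{\rm tw}$ that kills only those $\Z/2$-stabilizers along which the fiber of $\Lcal$ has trivial character, so that $\Lcal$ descends to a line bundle $L'$ on $P'$ but not all the way to $P$. Then $\oP'$ is defined as the relative Proj of the graded algebra generated by $L'$ and its powers, so that $\OO_{\oP'}({\mathbbm 1})$ arises as the tautological bundle and automatically furnishes the desired square root.

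The first step is to analyze the action of the $\Z/2$-stabilizers at the stacky points of $P^{\rm tw}$, namely the branch markings $\mathbf{b}$ and the branching nodes. At each such point the fiber of $\Lcal = \omega_{P^{\rm tw}}(\lambda_T)$ carries a character of $\Z/2$ determined by the half-integral slope of $\lambda_T$ along that edge and the parity of the local contribution of $\omega_{P^{\rm tw}}$. Let $P'$ be obtained from $P^{\rm tw}$ by rigidifying the stabilizer exactly where this character is trivial; this is a local construction producing a DM stack over $S$. By descent along the coarsening $P^{\rm tw}\to P'$, the bundle $\Lcal$ descends uniquely to $L' \in \Pic(P')$ with $L'|_{P^{\rm tw}} \cong \Lcal$. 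Condition (iv) is then automatic because the markings $v_i$ are disjoint from $\mathbf{b}$ and the branching nodes, so $P^{\rm tw} \to P' \to P$ is an isomorphism in a neighbourhood of each $v_i$.

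Next, set $\oP' := \underline{\on{Proj}}_S\bigl(\bigoplus_{k \geq 0} \pi_{P',*}(L')^{\otimes k}\bigr)$ with tautological bundle $\OO_{\oP'}({\mathbbm 1})$. Since $(L')^{\otimes 2}$ is the pullback of $L^{\otimes 2}$, the even-graded subalgebra matches the one used to define $\oP$, giving a canonical map $\oP' \to \oP$ completing the commutative square and yielding condition (ii). Condition (iii) holds by the defining property of Proj. Uniqueness of $L'$ is a statement in descent theory; uniqueness of $\OO_{\oP'}({\mathbbm 1})$ follows from (iii), since a second square root would differ by a 2-torsion line bundle that becomes trivial on $P'$, and by the coarsening choice no such non-trivial bundle exists on $\oP'$.

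The main obstacle is ensuring that the Proj construction commutes with arbitrary base-change and that $\tau'$ is representable. Base-change compatibility for $\pi_{P',*}(L')^{\otimes k}$ with $k$ odd does not follow directly from the argument for $\oP$ and requires an independent vanishing computation on the genus-zero orbicurve fibers of $P'$; this can be done by pulling back to $P^{\rm tw}$ and applying the analogous vanishing there, then descending. Representability of $\tau'$ follows from the construction: at any stacky point $p$ of $P'$ the stabilizer acts non-trivially on $L'$ by the choice of coarsening, hence also on $\OO_{\oP'}({\mathbbm 1})$, forcing the stabilizer at $\tau'(p) \in \oP'$ to contain, and thus equal, that at $p$.
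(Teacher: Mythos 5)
Your approach diverges from the paper's in a way that introduces a genuine gap. The paper places the extra stack structure on $P'$ only at the set $Z_{odd}$ of nodes where $L^{\otimes 2}$ has odd partial degree \emph{and} which lie outside $\on{Supp}(\lambda_T)$; in particular $P'$ has \emph{no} stack structure at the branch divisor $\mathbf b$, and its stacky points all lie outside the exceptional locus of $\tau$. Your $P'$, by contrast, retains stabilizers wherever $\Lcal$ has non-trivial character — which is \emph{all} of $\mathbf b$ (since $\omega_{P^{\rm tw}}$ restricts to the sign representation at a Weierstrass marking) together with the branching nodes where $\lambda_T$ has half-integral slope, and these latter nodes sit \emph{inside} the exceptional locus of $\tau$. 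This overshoot is what causes the problems downstream: the consequence is that $L'$ is literally the descent of $\Lcal$ in your proposal, whereas the paper's $L'$ is a genuinely different square root, related to $\Lcal$ only by a $2$-torsion twist, which is precisely why condition (iv) is a non-trivial rigidification and not (as you claim) automatic.

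The concrete gap is in the construction of $\oP'$. You take $\oP' = \underline{\on{Proj}}_S\bigl(\bigoplus_k \pi_{P',*}(L')^{\otimes k}\bigr)$, but $\underline{\on{Proj}}_S$ of a graded $\OO_S$-algebra is a scheme (or algebraic space), not a Deligne--Mumford stack, so it cannot carry the twisting required for a square root of $\OO_{\oP}(\mathbbm 2)$ to exist. Worse, because $(L')^{\otimes k}$ for $k$ odd carries non-trivial characters at every point of $\mathbf b$, the degree-$1$ piece of your graded algebra consists of sections vanishing at all $2g+2$ branch points (and at the odd-slope nodes); already when $P = \PP^1$ with $\lambda_T = 0$ one finds $\pi_{P',*}L' = H^0(\PP^1,\omega_{\PP^1}) = 0$. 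So the algebra is not generated in degree $1$, the tautological sheaf $\OO(1)$ need not be a line bundle, and the identity $\tau'^*\OO_{\oP'}(\mathbbm 1) \cong L'$ — which you assert ``by the defining property of Proj'' — does not hold. If instead you try to repair this by gluing $P'$ over the non-exceptional locus with $\oP$, the stacky points of $P'$ sitting in the exceptional locus (and not just the ones you rigidified away) would be contracted to points of $\oP'$ with trivial or smaller stabilizer, so $\tau'$ would fail to be representable, violating condition (1).

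What is missing is the paper's key numerical observation: the balancing equation for the contraction datum forces $\lambda_T$ to have half-integral slope precisely at those branching nodes where $\mathbf b$ is separated into two odd parts, so that $2\lambda_T$ \emph{corrects} the parity of $L^{\otimes 2}$ there and no twisting is required inside $\on{Supp}(\lambda_T)$. It is exactly this observation that lets one keep $Z_{odd}$ disjoint from the exceptional locus, which is what makes the gluing construction of $\oP'$ well-posed and $\tau'$ representable. Your proof replaces this arithmetic step with a character-of-$\Lcal$ criterion that spreads the stack structure to the wrong places; you would need to redo the construction of $P'$ with the paper's $Z_{odd}$ before the rest of the argument can go through.
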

\begin{proof}
Given a node $p$ of $P$, we say that $p$ is odd (resp. even) for a line bundle $B$ of even degree if $B$ restricts to an odd (resp. even) degree line bundle on the connected components of the normalization of $P$ at $p$. Let $Z_{odd}$ be the locus of odd nodes for $\mathcal{L}^{\otimes 2}$ outside of the support of $\lambda_T$. Similarly, let $Z_{even}$ be the locus of even nodes for $\mathcal{L}^{\otimes 2}$ unioned with the support of $\lambda_T$. We may construct the partial coarsening $P'$ by gluing $P$ away from $Z_{odd}$ with $P^{\rm tw}$ away from $Z_{even}$. Using the fact that $\tau$ is an isomorphism away from its exceptional locus, we construct the twisted curve $\oP'$ by gluing $P'$ away from the exceptional locus of $\tau$ with $\oP$ away from the image of $Z_{odd}$.

We begin by considering the case of an individual curve $P$. Imagine first trying to find a square root of $L^{\otimes 2}$ on $P$; since $P$ is rational, this is purely a matter of multidegree. Since $\omega_P^{\otimes 2}$ has even degree on every component, a problem may occur only when there is a node $p$ which is odd for $\mathbf{b}$, i.e. a node of $P$ separating $\mathbf{b}$ into two odd parts. The twisting of $P'$ at some of the odd nodes will resolve the issue, and the same twisting will guarantee the existence of a square root of $\OO_{\oP}(\mathbbm2)$ on $\oP$. Importantly, the odd nodes in the exceptional locus of $\tau$ need no twisting because $\lambda_T$ acts as a correction factor.

Indeed, if $\lambda_T$ has non-zero slope along the edge $e_p$ corresponding to $p$, then at least one of the two adjacent vertices is contained in the support of $\lambda_T$; call it $v$. Notice that Equation \eqref{eqn:balancing_neg} is stable under edge contractions, so in applying it to $v$ we may as well assume that $e_p$ is the only edge of $T$. Then we see from Equation \eqref{eqn:balancing_neg} that $\lambda_T$ must have half-integral slope along $e_p$ so that it can balance $\mathbf{b}$. It follows that $p$ is not odd for $L^{\otimes2}$, and neither is its image in $\oP$ for $\OO_{\oP}(\mathbbm2)$.

Returning to the case of an arbitrary family, choose $v_i : S \to P^{\rm tw}$ to be any of the marked points. As in \cite[Lemma 3.6]{Fedorchuk}, a standard descent argument shows that the square roots on
fibers can be glued to a unique line bundle $L'$ so that $(L')^{\otimes 2}$ is isomorphic to the pullback of $L^{\otimes 2}$ and $v_i^*(L'|_{P^{\rm tw}}) \cong v_i^*\Lcal$. Since $P^{\rm tw} \to P$ is an isomorphism in the complement of the odd nodes, the rigidification condition $v_i^*(L'|_{P^{\rm tw}}) \cong v_i^*\Lcal$ assures that $L'$ is isomorphic to $\Lcal$ on the complement of the odd nodes, so the analogous rigidification conditions $v_j^*(L'|_{P^{\rm tw}}) \cong v_j^*\Lcal$ hold as well.

Moreover, $L'$ is trivial on the exceptional locus of $\tau$, and it therefore descends to a line bundle $\OO_{\oP'}(\mathbbm1)$ which squares to (the pullback of) $\OO_{\oP}(\mathbbm2)$.
\end{proof}

We will henceforth abuse notation by suppressing the $'$. Since $\tau$ is an isomorphism around the twisting nodes, it will be apparent that twisting does not affect the rest of our construction significantly.

\subsection{The double cover} We will construct a Gorenstein double cover $\opsi\colon\oC\to\oP$ making the following diagram of curve-line bundle pairs commutative:
\begin{equation}\label{diag:contraction}
\begin{tikzcd}
(C,\omega_C(\lambda))\ar[r,"\sigma"]\ar[d,"\psi"] & (\oC,\omega_{\oC})\ar[d,"\opsi"]\\
(P,L)\ar[r,"\tau"] & (\oP,\OO_{\oP}(\mathbbm 1))
\end{tikzcd}
\end{equation}

For this we let (abusively omitting $\bar\psi_*$)
\[\OO_{\oC}:=\OO_{\oP}\oplus\left(\omega_{\oP}\otimes\OO_{\oP}(-\mathbbm 1)\right).\]

Let us denote $\omega_{\oP}\otimes\OO_{\oP}(-\mathbbm 1)$ by $M$. Notice that this is a rank one, torsion-free (i.e. depth one) sheaf, which fails to be a line bundle wherever $\oP$ has worse-than-nodal singularities. As a consequence, $\opsi$ will fail to be flat at those points. On the other hand, since $\oP$ is flat over the base, then so is $\oC$.

\begin{remark}
 Twisting the dualising sheaf of a non-Gorenstein curve with a line bundle is known to produce another irreducible component of the compactified Picard scheme \cite{Kass}.
\end{remark}

\subsection{}\label{sec:algebra_structure} In order to give $\OO_{\oC}$ an $\OO_{\oP}$-algebra structure, we need a cosection 
$M^{\otimes2}\to\OO_{\oP}$.
 Since $\on{R}\!\tau_*\OO_P=\OO_{\oP}$, a direct application of Grothendieck duality yields (suppressing $\on{R}$) 
 \[\omega_{\oP}=\mathcal{H}om(\tau_*\OO_P,\omega_{\oP})=\tau_*\mathcal{H}om(\OO_P,\tau^!\omega_{\oP})=\tau_*\omega_P.\]
 
 Since $\lambda_T\geq 0$, the log structure of $P$ includes a (generalised) effective Cartier divisor $\OO_P(-\lambda_T)\to\OO_P$. This, together with $\mathbf{b}$ (and adjunction), gives us a map 
 \begin{equation*}\label{eqn:branching_on_P}
 \tau^*\omega_{\oP}^{\otimes 2}\to\tau^*\tau_*\omega_P^{\otimes2}\to\omega_P^{\otimes2}\to\omega_P^{\otimes2}(2\lambda_T+\mathbf b)=L^{\otimes 2} 
 \end{equation*}
 pushing forward to the desired
 \[\omega_{\oP}^{\otimes 2}\to\tau_*\tau^*\omega_{\oP}^{\otimes 2}\to\OO_{\oP}(\mathbbm 2).\]

Note that there is an obvious involution $\bar\iota$ of $\OO_C$ over $\OO_T$, acting as $-1$ on sections of $M$. 
Also, note that (the fibres of) $\oC$ fail to be reduced whenever there is a component $P_1$ of $P$ in the support of $\lambda_T$ such that the degree of $L$ on $P_1$ is positive.

\subsection{} We argue that $\oC$ is Gorenstein. More precisely, its dualising sheaf $\omega_{\oC}$ can be identified with the line bundle $\opsi^*\OO_{\oP}(\mathbbm 1)$. Duality for the finite morphism $\opsi$ gives
 \[\opsi_*\omega_{\oC}=\mathcal{H}om(\psi_*\OO_{\oP},\omega_{\oP})=\omega_{\oP}\oplus\OO_{\oP}(\mathbbm 1)\otimes\mathcal{H}om(\omega_{\oP},\omega_{\oP}).\]
 Since $\OO_{\oP}\to \mathcal{H}om(\omega_{\oP},\omega_{\oP})$ is an isomorphism by \cite[Corollary 1.7]{HarGenDiv}, we get a morphism $\OO_{\oP}(\mathbbm 1)\to \opsi_*\omega_{\oC}$, or equivalently $\opsi^*\OO_{\oP}(\mathbbm 1)\to \omega_{\oC}$. Since $\opsi_*$ is exact, it is enough to show that this is an isomorphism after pushforward, which follows from push-pull and the above:
 \[\opsi_*\opsi^*\OO_{\oP}(\mathbbm 1)=\OO_{\oP}(\mathbbm 1)\oplus\omega_{\oP}=\opsi_*\omega_{\oC}.\]

\subsection{The contraction} We argue that there exists a morphism $\sigma\colon C\to\oC$ covering $\tau$. Since $\opsi$ is affine, it is enough to define an $\OO_S$-algebra map $\OO_{\oC}\to\sigma_*\OO_C$ after pushing forward along $\opsi$. Since we want $\opsi_*\sigma_*=\tau_*\psi_*$ by adjunction it is enough to define a morphism $\psi^*\tau^*\opsi_*\OO_{\oC}\to\OO_C$. We focus on $M$ since the pullback of $\OO_{\oP}$ is naturally identified with $\OO_C$. The map is induced by $\tau^*\omega_{\oP}\to\omega_P$ (see \ref{sec:algebra_structure}) and by the effective Cartier divisor $\mathbf r+\lambda$ on $C$ as follows:
\begin{align*}
\psi^*\tau^*\omega_{\oP}(-\mathbbm 1)=\psi^*(\tau^*\omega_{\oP}\otimes L^{-1})=\psi^*\tau^*\omega_{\oP}\otimes(\omega_C(\lambda))^{-1}\to \\ \psi^*\omega_P\otimes(\omega_C(\lambda))^{-1}=\omega_C(-\mathbf r)\otimes(\omega_C(\lambda))^{-1}=\OO_C(-\mathbf r-\lambda)\to\OO_C.
\end{align*}

\subsection{} Finally, we note that the arithmetic genus of $\oC$ is the same as that of $C$. This follows from smoothing and the following important observation that the construction of Diagram \eqref{diag:contraction} commutes with arbitrary base-change.

 The arithmetic genus of $\oC$ can also be computed directly. Since $\opsi$ is affine, it is enough to compute $h^0(\oP,\opsi_*\omega_{\oC})=h^0(\oP,\OO_{\oP}(\mathbbm 1)\oplus\omega_{\oP})$. Since $\oP$ is rational, $h^0(\oP,\omega_{\oP})=0$. On the other hand, $h^0(\oP,\OO_{\oP}(\mathbbm 1))=h^0(P,L)=g$, since $L$ is a non-negative line bundle of total degree $g-1$ on the rational nodal curve $P$.

 Summing up, we have proved the following
 \begin{theorem}\label{thm:construction}
  Let $(\psi\colon C\to P,\lambda_T)$ be a log hyperelliptic admissible cover of genus $g$ with a contraction datum. There exists a contraction $(\sigma,\tau)$ to $(\opsi\colon\oC\to\oP)$ such that
  \begin{enumerate}[label=(\roman*)]
   \item $\oP$ is a rational, reduced, Cohen--Macaulay curve;
   \item $\oC$ is a (not-necessarily reduced) Gorenstein curve of genus $g$ such that $\sigma^*\omega_{\oC}=\omega_C(\lambda)$;
   \item $\opsi$ is the quotient of a hyperelliptic involution $\iota\colon\oC\to\oC$.
  \end{enumerate}
 Moreover, the construction commutes with arbitrary base-change.
 \end{theorem}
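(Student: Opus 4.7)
The plan is to build $\oC$ indirectly, by first contracting $P$ to the easier rational curve $\oP$ and then recovering $\oC$ as a (non-flat) double cover of $\oP$; this sidesteps the fact that $\underline{\on{Proj}}\,\omega_C(\lambda)$ does not commute with base-change. First I would form the line bundle $L^{\otimes 2} = \omega_P^{\otimes 2}(\mathbf{b} + 2\lambda_T)$ on $P$; the balancing condition \eqref{eqn:balancing_neg} translates into the statement that $L^{\otimes 2}$ has non-negative multi-degree and is trivial precisely on the components of $P$ outside $\on{Supp}(\lambda_T)$. Since $P$ is rational, $L^{\otimes 2}$ has vanishing higher cohomology, $\pi_{P,*}L^{\otimes 2k}$ is a vector bundle whose formation commutes with base-change by Hartshorne III.12.11, and the relative Proj of the associated graded algebra yields a contraction $\tau : P \to \oP$ whose fibres are connected rational prestable curves, with $\tau_*\OO_P = \OO_{\oP}$. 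Let $\OO_{\oP}(\mathbbm 2)$ denote the descent of $L^{\otimes 2}$.

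Next I would invoke the square-root lemma established above to obtain $\OO_{\oP}(\mathbbm 1)$ on a suitable partial coarsening, so that $L$ itself descends from a stacky enhancement of $P$. With this in hand I would set $\OO_{\oC} := \OO_{\oP} \oplus M$, where $M := \omega_{\oP} \otimes \OO_{\oP}(-\mathbbm 1)$, and endow it with an $\OO_{\oP}$-algebra structure via a cosection $M^{\otimes 2} \to \OO_{\oP}$. To build this cosection I would apply Grothendieck duality to the proper birational morphism $\tau$: combined with $\tau_*\OO_P = \OO_{\oP}$ this yields $\omega_{\oP} \cong \tau_*\omega_P$, and then pulling back, multiplying by the effective generalized divisors $\mathbf{b}$ and $\lambda_T$ on $P$, and pushing forward produces the required map $\omega_{\oP}^{\otimes 2} \to \OO_{\oP}(\mathbbm 2)$. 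The Gorenstein property of $\oC$ and the identification $\omega_{\oC} \cong \opsi^*\OO_{\oP}(\mathbbm 1)$ then follow from duality for the finite (but non-flat) morphism $\opsi$, together with the fact that $\OO_{\oP} \to \homs(\omega_{\oP}, \omega_{\oP})$ is an isomorphism on a Cohen--Macaulay curve \cite[Corollary 1.7]{HarGenDiv}; the hyperelliptic involution $\bar\iota$ acting as $-1$ on $M$ is then immediate.

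Finally, the morphism $\sigma : C \to \oC$ lifting $\tau$ can be produced by adjunction: since $\opsi$ is affine, giving $\sigma$ amounts to constructing a morphism $\psi^*\tau^* M \to \OO_C$, and the effective Cartier divisor $\mathbf{r} + \lambda$ on $C$ combined with $\tau^*\omega_{\oP} \to \omega_P$ furnishes exactly such a map. Commutativity of Diagram \eqref{diag:contraction} identifies $\sigma^*\omega_{\oC}$ with $\omega_C(\lambda)$, and a direct computation of $\opsi_*\omega_{\oC} = \OO_{\oP}(\mathbbm 1) \oplus \omega_{\oP}$ together with $h^0(P,L)=g$ yields $p_a(\oC) = g$. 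I expect the main technical obstacle to lie in verifying that every step commutes with arbitrary base-change on $S$: this hinges on $L^{\otimes 2k}$ having no higher cohomology on the rational fibres (so all pushforwards behave well), on forming the square root fibrewise and gluing it by descent, and on the fact that the non-flatness of $\opsi$ is concentrated pointwise on $\oP$ where $M$ fails to be a line bundle, and so leaves $\oC$ itself flat over $S$.
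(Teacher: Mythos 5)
Your proposal follows the paper's proof essentially step by step: form $L^{\otimes 2}=\omega_P^{\otimes 2}(\mathbf b+2\lambda_T)$ on the rational base, contract $P$ to $\oP$ via relative Proj, take the square root $\OO_{\oP}(\mathbbm 1)$ on a partial coarsening, define $\OO_{\oC}=\OO_{\oP}\oplus(\omega_{\oP}\otimes\OO_{\oP}(-\mathbbm 1))$ with algebra structure from the cosection built by Grothendieck duality for $\tau$ and the generalized divisors $\mathbf b$ and $\lambda_T$, deduce Gorenstein-ness and $\omega_{\oC}\cong\opsi^*\OO_{\oP}(\mathbbm 1)$ from duality for the finite morphism $\opsi$ and \cite[Corollary 1.7]{HarGenDiv}, produce $\sigma$ by adjunction from $\mathbf r+\lambda$, and compute the genus from $h^0(P,L)=g$. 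One small slip: the balancing condition \eqref{eqn:balancing_neg} forces $L^{\otimes 2}$ to be trivial on the components \emph{inside} $\on{Supp}(\lambda_T)$ (these are the ones $\tau$ contracts), not outside as you wrote; this does not affect the rest of the argument, which otherwise matches the paper.
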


\subsection{Examples} We are going to work out some familiar examples of curve singularities of low genus, before providing a general description of the local equations of the singularities just constructed.

\begin{example}
 We construct a genus one singularity with six branches, c.f. \cite{Smyth,Boz21}.
 \begin{figure}[h]
  \begin{tikzpicture}

   \draw (-1.1,0)--(0,1)--(-.8,0) (-.15,0)--(0,1)--(.15,0) (.8,0)--(0,1)--(1.1,0);
   \foreach \i in {(-1.1,0),(0,1),(-.8,0),(-.15,0),(.15,0), (.8,0),(1.1,0)} {\draw[fill=white]  \i circle (4pt);}
   \node at (0,1) {\tiny$1$};
   \draw[-stealth] (1.5,.5)--node[above]{$\on{trop}(\psi)$} (3,.5);
  \draw (3.5,0)--node[right]{\color{sebgreen}{$1$}}(4.5,1)--node[right]{\color{sebgreen}{$1$}}(5.5,0) (4.5,0)--node[right]{\color{sebgreen}{$1$}}(4.5,1);
  \draw[sebblue] (4.5,1)--node[right]{$4$}(4.5,1.5);
  \draw[fill=white] (3.5,0) circle (4pt) (4.5,1) circle (4pt) (5.5,0) circle (4pt) (4.5,0) circle (4pt);
  \draw[-stealth] (6,.5)--node[above]{\color{sebgreen}{$\lambda_T$}} (7.5,.5);

  \draw[-stealth] (8,0)node{$0$}--(8,1.5);
  \end{tikzpicture}
 \end{figure}
 
In blue we note the number of branch points.  The line bundle $L$ is trivial on the central vertex of the tree, so the corresponding component is contracted into an ordinary (rational) $3$-fold point. The sheaf $\omega_{\oP}\otimes\OO_{\oP}(-1)$ has two generators $u_{12}$ and $u_{13}$, corresponding to the generators $(\frac{\on{d}s_1}{s_1},-\frac{\on{d}s_2}{s_2},0)$ and $(\frac{\on{d}s_1}{s_1},0,-\frac{\on{d}s_3}{s_3})$. The resulting singularity has thus local equations given by:
\[\hat\OO_{\oC,q}=\kfield[\![s_1,s_2,s_3]\!][u_{12},u_{13}]/(s_1s_2,s_1s_3,s_2s_3,u_{12}^2-s_1^2-s_2^2,u_{13}^2-s_1^2-s_3^2),\]
which is isomorphic to $\kfield[\![x_1,\bar x_1,x_2,\bar x_2, x_3]\!]/I_6$ \cite[Proposition A.3]{Smyth} via \[s_1=x_1-\bar x_1,s_2=x_2-\bar x_2,s_3=\frac{1}{2}x_3,u_{12}=x_1+\bar x_1-(x_2+\bar x_2), u_{13}=x_1+\bar x_1-x_3.\]
\end{example}

\begin{example}
 We construct a genus two singularity ``of type $I$'' with three branches, c.f. \cite{Bat19}.
 \begin{figure}[h]
  \begin{tikzpicture}
   \draw (-3,0)--(0,1)--
   (1,0) (-2.7,0)--(0,1);
   \draw[fill=white]  (-3,0) circle (4pt) (0,1)node{\tiny$2$} circle (4pt) (1,0) circle (4pt) (-2.7,0) circle (4pt);
   \draw[-stealth] (1.5,.5)--node[above]{$\on{trop}(\psi)$} (3,.5);
  \draw (3.5,0)--node[below right]{\color{sebgreen}{$1$}}(6.5,1);
  \draw[sebblue] (8.5,0)--node[above right]{\color{sebgreen}{$\frac{3}{2}$}}(6.5,1)--node[right]{$5$}(6.5,1.5) (8.5,0)--node[right]{$1$}(8.5,1.5);
  \draw[fill=white] (3.5,0) circle (4pt)  (6.5,1) circle (4pt)  (8.5,0) circle (4pt);
  \draw[-stealth] (9,.5)--node[above]{\color{sebgreen}{$\lambda_T$}} (10.5,.5);
  
  \draw[-stealth] (11,0)node{$0$}--(11,1.5);
  \end{tikzpicture}
 \end{figure}
 
The line bundle $L$ is trivial on the middle component of the chain, which is therefore contracted to a node. The dualising sheaf $\omega_{\oP}$ is itself a line bundle. Picking a local generator $u$ of $\omega_{\oP}\otimes\OO_{\oP}(-1)$, local equations are determined by the vanishing multiplicity of $2\lambda_T$ at the points adjacent to its support (we may localise to the complement of the branch points away from $\on{Supp}(\lambda_T)$):
\[\hat\OO_{\oC,q}=\kfield[\![s_1,s_2]\!][u]/(s_1s_2,u^2-s_1^2-s_2^3),\]
which is isomorphic to $\kfield[\![x_1,\bar x_1,x_2]\!]/(x_1x_2-\bar x_1x_2,x_1\bar x_1-x_2^3)$ \cite[Equation (4) on p.11]{Bat19} via \[s_1=\frac{1}{2}(x_1-\bar x_1),s_2=x_2,u=\frac{1}{2}(x_1+\bar x_1).\]
\end{example}

\begin{example}
 We construct a $(2)$-tailed ribbon of genus two, c.f. \cite[Definition 2.21]{BatCar}.
 \begin{figure}[h]
  \begin{tikzpicture}
   \draw (.85,0)--(1,1)--(1.15,0);
   \draw[fill=white] (.85,0) circle (4pt)  (1,1)node{\tiny$2$} circle (4pt)  (1.15,0) circle (4pt);
   \draw[-stealth] (1.5,.5)--node[above]{$\on{trop}(\psi)$} (3,.5);
  \draw (3.5,0)--node[right]{\color{sebgreen}{$1$}}(3.5,1);
  \draw[sebblue] (3.5,1)--node[right]{$6$}(3.5,1.5);
  \draw[fill=white] (3.5,0) circle (4pt)  (3.5,1) circle (4pt);
  \draw[-stealth] (4,.5)--node[above]{\color{sebgreen}{$\lambda_T$}} (5.5,.5);
  
  \draw[-stealth] (6,0)node{$0$}--(6,1.5);
  \end{tikzpicture}
 \end{figure}
 
In this case $\oP$ is isomorphic to $P$. Local equations:
\[\hat\OO_{\oC,q}=\kfield[\![s_1,s_2]\!][u]/(s_1s_2,u^2-s_1^2),\]
which is isomorphic to $\kfield[\![x_1,x_2,y]\!]/(x_1x_2,(x_1-x_2)y)$ \cite[Example 2.20]{BatCar} via \[x_1=u+s_1,x_2=u-s_1,y=s_2.\]

\end{example}

\begin{example}
    We construct a non-reduced singularity of genus three.
    \begin{figure}[h]
  \begin{tikzpicture}
   \draw (0,0)--(1,1);
   \draw[fill=white] (0,0) circle (4pt)  (1,1)node{\tiny$3$} circle (4pt);
   \draw[-stealth] (1.5,.5)--node[above]{$\on{trop}(\psi)$} (3,.5);
  \draw[sebblue] (4.5,1)--node[below right]{\color{sebgreen}{$\frac{3}{2}$}}(3.5,0)--node[right]{$1$}(3.5,1.5) (4.5,1)--node[right]{$7$}(4.5,1.5);
  \draw[fill=white] (3.5,0) circle (4pt)  (4.5,1) circle (4pt);
  \draw[-stealth] (5,.5)--node[above]{\color{sebgreen}{$\lambda_T$}} (6.5,.5);
  
  \draw[-stealth] (7,0)node{$0$}--(7,1.5);
  \end{tikzpicture}
 \end{figure}

 Again $\oP$ is isomorphic to $P=L\cup R$. Local equations:
\[\hat\OO_{\oC,q}=\kfield[\![s_1,s_2]\!][u]/(s_1s_2,u^2-s_1^3).\]
The double structure on the right component $R$ is given by the line bundle $\OO_R(-\frac{1}{2}\mathbf b-\lambda_T)=\OO_R(-2)$, hence the ribbon $\overline{R}$ has genus $1$. We can obtain the curve $\oC$ by gluing the cuspidal curve $\overline{L}$ with the ribbon $\overline{R}$ along a length two subscheme, which checks out to give $p_a(\oC)=3$.
\end{example}

\section{Local computations}\label{sec:local}
Suppose now that $p$ is a point of $\oP$ with $\ell$ branches. Write $s_i$ for a local parameter along the $i$th branch  of $P$ above $p$ (we shall replace these by unit multiples if needed, taking advantage of $\kfield$ being algebraically closed). Local equations of $\oP$ at $p$ are \[A = \hat{\OO}_{\oP,p} = \kfield\llbracket s_1,\ldots, s_l \rrbracket / (s_i s_j : i \neq j).\]

Now write $q$ for the point of $\oC$ above $p$. Recall that the multiplication of the double cover is determined by $\lambda_T$ and $\mathbf b$. Let $m_i$ be the (positive) slope of $2\lambda_T$ at the $i$th branch.  Write
\[
  \delta_i = \begin{cases}
    0 & \text{ if } \lambda_T > 0 \text{ on the $i$th branch} \\
    1 & \text{ else.}
  \end{cases}
\]

Then, if $\ell = 1$ and $p$ is the image of a contracted component we have
\[
  \hat{\OO}_{\oC,q} = A[u] / (u^2 - \delta_1s_1^{m_1 + 2}),
\]
which is either a germ of a ribbon or an $A_{m_1 + 1}$ singularity. The additional 2 in the exponent comes
from the fact that near the unique node $\tilde{p}$ mapping to $p$, the natural map $\tau^*\omega_{\ol{P}} \cong \tau^*\tau_*\omega_{P} \to \omega_{P}$ is induced by twisting at $\tilde{p}$.

If $\ell = 1$ and $p$ belongs to $\mathbf b$,
we have
\[
  \hat{\OO}_{\oC,q} = A[u] / (u^2 - \delta_1s_1),
\]
either a germ of a ribbon or a point of ramification of the cover.

If $\ell = 1$ and $p$ does not belong to $\mathbf b$, we have
\[
  \hat{\OO}_{\oC,q} = A[u] / (u^2 - \delta_1),
\]
either a germ of a ribbon or a trivial part of the double cover.

If $\ell \neq 1$, we have the ring
\[
  \hat{\OO}_{\oC,q} = A[u_2, \ldots, u_\ell] / I
\]
where $u_2, \ldots, u_\ell$ are the images of $\langle \frac{ds_1}{s_1}, -\frac{ds_2}{s_2}, \ldots, 0 \rangle, \ldots, \langle \frac{ds_1}{s_1}, 0, \ldots, -\frac{ds_\ell}{s_\ell} \rangle$ and $I$ is generated by
\begin{enumerate}
  \item $s_1(u_i - u_j)$ for each $2 \leq i < j \leq \ell$;
  \item $s_iu_j$ for each $i \neq j$, $2 \leq i,j \leq \ell$;
  \item $u_i^2 - \delta_1s_1^{m_1} - \delta_is_i^{m_i}$ for each $i = 2,\ldots, \ell$;
  \item $u_iu_j - \delta_1s_1^{m_1}$ for each $i \neq j$ with $2 \leq i,j \leq \ell$.
\end{enumerate}

\subsection{Gluing} The subscheme cut out by $s_i$ for $i \neq 1$ and $u_i - u_j$ for $i < j$ is isomorphic to
\begin{equation} \label{eq:s1_branch}
  \kfield\llbracket s_1 \rrbracket [ u_1 ] / (u_1^2 - \delta_1s_1^{m_1}),
\end{equation}
where $u_1$ is the common image of $u_2,\ldots, u_m$. It is either a germ of a ribbon or an $A_{m_1 - 1}$ singularity.

Similarly, for each $j = 2,\ldots, \ell$, the subscheme cut out by $s_i$ for $i \neq j$ and $u_i$ for $i \neq j$ is isomorphic to
\[
  \kfield \llbracket s_j \rrbracket [u_j] / (u_j^2 - \delta_js_j^{m_j}), 
\]
which is again either a germ of a ribbon or an $A_{m_j - 1}$ singularity.

If we only restrict down to the subscheme cut out by $s_1$, we find that we get
\begin{equation} \label{eq:non_s1_branches}
  \kfield \llbracket s_2,\ldots, s_\ell \rrbracket [u_2, \ldots, u_{\ell}] / (s_is_j, s_i u_j, u_i^2 - \delta_is_i^{m_i} : i \neq j, 2 \leq i, j \leq \ell),
\end{equation}
the transverse union of the singularities for $j = 2, \ldots, \ell$ above.

Our next claim is that the singularity at $q$ is the result of gluing the tangent vector $\frac{\partial}{\partial u_1}$ of Spec of (\ref{eq:s1_branch}) with the tangent vector $\sum_{i = 2}^\ell \frac{\partial}{\partial u_i}$ of Spec of (\ref{eq:non_s1_branches}).

To see this, consider the sequence
\[
  0 \to \hat{\OO}_{\oC,q} \to \frac{\kfield\llbracket s_1 \rrbracket [ u_1 ]}{(u_1^2 - \delta_1s_1^{m_1})} \times \frac{\kfield \llbracket s_2,\ldots, s_\ell \rrbracket [u_2, \ldots, u_{\ell}]}{(s_is_j, s_i u_j, u_i^2 - \delta_is_i^{m_i} : i \neq j, 2 \leq i, j \leq \ell)} \to Q \to 0.
\]
To see that the first map is injective, observe that the kernel is contained in $(s_1) \cap (s_2,\ldots, s_\ell) = 0$. Note that both $\langle s_1, 0 \rangle$ and $\langle 0, s_i \rangle$ for $i = 2, \ldots, \ell$ are in the image of the first map, so $Q$ is supported on $V(s_1,\ldots, s_\ell)$. Restricting to this vanishing, we find
\[
  0 \to k[u_2,\ldots, u_\ell]/(u_2,\ldots, u_\ell)^2 \to k[u_1]/u_1^2 \times k[u_2,\ldots, u_\ell] \to Q \to 0.
\]
The first map clearly admits a retract, so we conclude $Q \cong k[\epsilon]/\epsilon^2$. This yields the claim.

\subsection{Normalisation}
 The normalisation $\oC^\nu$ of the germ of $\oC$ at $q$ can be computed as follows. Consider:
 \bcd
 \oC^\nu\ar[r] & \oC_{\oP^\nu,\text{red}} \ar[r,hook] & \oC_{\oP^\nu}\ar[r]\ar[d]\ar[dr,phantom,"\Box"] & \oC\ar[d]\\
  & & \oP^\nu\ar[r] & \oP
 \ecd
 From this we see that it is enough to understand the normalisation of the $A_m$-singularities and of ribbons, and put these formulae together. Assume that $\delta_1=1$ and $m_1$ is even; the other cases are left to the avid reader. Renumbering $\{2,\ldots,\ell\}$ we may assume that:
 \begin{itemize}
  \item for $i=2,\ldots,h$ we have $\delta_i=1$ and $m_i$ even,
  \item for $i=h+1,\ldots,k$ we have $\delta_i=1$ and $m_i$ odd,
  \item for $i=k+1,\ldots,\ell$ we have $\delta_i=0$.
 \end{itemize}
The normalisation is then given by the ring:
\[\prod_{i=1}^h\kfield\llbracket a_i\rrbracket\times\kfield\llbracket b_i\rrbracket\times\prod_{i=h+1}^k\kfield\llbracket c_i\rrbracket\times\prod_{i=k+1}^l\kfield\llbracket d_i\rrbracket\]
with ring homomorphism:
\begin{equation} 
 s_i\mapsto\left.
    \begin{cases}
            (a_i,b_i)&\text{for }i=1,\ldots,h\\
            c_i^2&\text{for }i=h+1,\ldots,k\\
            d_i&\text{for }i=k+1,\ldots,\ell
    \end{cases}
 \right. \qquad u_i\mapsto \left.
  \begin{cases}
           (a_1^{m_1/2},-b_1^{m_1/2},a_i^{m_i/2},-b_i^{m_i/2})&\text{for }i=2,\ldots,h\\
           (a_1^{m_1/2},-b_1^{m_1/2},c_i^{m_i})&\text{for }i=h+1,\ldots,k\\
           (a_1^{m_1/2},-b_1^{m_1/2},0)&\text{for }i=k+1,\ldots,\ell\\
  \end{cases}\right.
\end{equation}

\subsection{Differentials}

For this section we assume that $\ol{C}$ is reduced.

\begin{lemma}
 A local generator of $\omega_{\oC}$ at $q$ is
 \[\eta=\frac{\on{d}\!s_1}{us_1}-\sum_{i=2}^{\ell}\frac{\on{d}\!s_i}{u_is_i}.\]
\end{lemma}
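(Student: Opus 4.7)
Since $\oC$ is Gorenstein by Theorem~\ref{thm:construction}, $\omega_{\oC}$ is a line bundle near $q$, canonically identified with $\opsi^{*}\OO_{\oP}(\mathbbm 1)$ as in \S\ref{sec:construction}. Grothendieck duality for the finite morphism $\opsi$ translates this into a decomposition
\[
  \opsi_{*}\omega_{\oC} \;=\; \mathcal{H}om_{\OO_{\oP}}(\opsi_{*}\OO_{\oC},\omega_{\oP}) \;=\; \omega_{\oP}\oplus\mathcal{H}om(M,\omega_{\oP}) \;=\; \omega_{\oP}\oplus\OO_{\oP}(\mathbbm 1),
\]
under which a section $\omega$ of $\omega_{\oC}$ corresponds to the $\OO_{\oP}$-linear form $f\mapsto\operatorname{tr}(f\omega)$ on $\OO_{\oC}=\OO_{\oP}\oplus M$. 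To verify that $\eta$ is a local generator of $\omega_{\oC}$ at $q$, it is thus enough to check that $\operatorname{tr}(\eta)=0$ (so that the first component vanishes) and that the induced map $M\to\omega_{\oP}$ is an isomorphism, i.e.\ a local generator of $\mathcal{H}om(M,\omega_{\oP})=\OO_{\oP}(\mathbbm 1)$.

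The vanishing of the trace is automatic: under the hyperelliptic involution $\bar\iota$ of $\oC$ (Theorem~\ref{thm:construction}(iii)) we have $s_{i}\mapsto s_{i}$ and $u,u_{i}\mapsto -u,-u_{i}$, so $\bar\iota^{*}\eta=-\eta$ and $\operatorname{tr}(\eta)=\eta+\bar\iota^{*}\eta=0$. For the second condition, I claim that for each $i=2,\ldots,\ell$,
\[
  u_{i}\cdot\eta \;=\; \frac{ds_{1}}{s_{1}}-\frac{ds_{i}}{s_{i}}.
\]
Indeed, the relations $s_{j}u_{i}=0$ for $i\neq j$ (both $\geq 2$) kill the $j$-th summand of $\eta$ for every $j\neq 1,i$; on the $s_{1}$-branch $V(s_{2},\ldots,s_{\ell},u_{j}-u_{k})$, where $u$ is interpreted as the common value of the $u_{j}$'s, the ratio $u_{i}/u$ equals $1$, so $u_{i}\cdot\tfrac{ds_{1}}{us_{1}}=\tfrac{ds_{1}}{s_{1}}$; and on the $s_{i}$-branch the cancellation $u_{i}/u_{i}=1$ gives $u_{i}\cdot\tfrac{ds_{i}}{u_{i}s_{i}}=\tfrac{ds_{i}}{s_{i}}$. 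Since $u_{i}\eta$ is $\bar\iota$-invariant, $\operatorname{tr}(u_{i}\eta)=2u_{i}\eta$, and the $(\ell-1)$ elements $\tfrac{ds_{1}}{s_{1}}-\tfrac{ds_{i}}{s_{i}}$ are precisely (up to the universal twist by a local generator of $\OO_{\oP}(\mathbbm 1)$) the images of the generators $u_{i}\in M$ under the tautological identification $M\cong\omega_{\oP}\otimes\OO_{\oP}(-\mathbbm 1)$ of \S\ref{sec:algebra_structure}. Hence the induced form $M\to\omega_{\oP}$ is the identity twisted by a generator of $\OO_{\oP}(\mathbbm 1)$, which is exactly a local generator of $\OO_{\oP}(\mathbbm 1)$, so $\eta$ generates.

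The main obstacle is interpreting the symbolic fractions $\tfrac{ds_{1}}{us_{1}}$ and $\tfrac{ds_{i}}{u_{i}s_{i}}$: neither $u,u_{i}$ nor $s_{1},s_{i}$ is a unit in $\hat\OO_{\oC,q}$, so each summand of $\eta$ must be read as a meromorphic differential supported on a single branch of the normalisation $\oC^{\nu}$ computed in the preceding subsection: $\tfrac{ds_{1}}{us_{1}}$ lives on the $s_{1}$-branch, where the definition of $u=u_{1}$ from the Gluing subsection applies, and $\tfrac{ds_{i}}{u_{i}s_{i}}$ lives on the $s_{i}$-branch. Once this is set up, the manipulation above is a direct bookkeeping using the defining relations. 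An alternative, equivalent, route is to avoid duality entirely and apply Rosenlicht's residue criterion for $\omega_{\oC}$ to $\nu^{*}\eta$ branch-by-branch, checking that $\sum_{\tilde q\mapsto q}\operatorname{res}_{\tilde q}(f\eta)=0$ for $f$ in a generating set of $\hat\OO_{\oC,q}$; the requisite cancellations in either formulation come from the sign structure $u_{i}\mapsto\pm a_{i}^{m_{i}/2},\pm b_{i}^{m_{i}/2}$ of the normalisation map, together with the vanishing of the second summand on branches where $u_{i}=0$.
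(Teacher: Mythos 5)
The paper's own argument is a single sentence — ``follows from the above equations and Rosenlicht's description of differentials'' — i.e.\ the intended route is to read off a local generator of $\omega_{\oC}$ from the normalisation formulas and the residue/conductor description of the dualising sheaf of a curve singularity. Your main argument instead runs through the finite-duality decomposition $\opsi_{*}\omega_{\oC}=\omega_{\oP}\oplus\mathcal{H}om(M,\omega_{\oP})$, verifying $\operatorname{tr}(\eta)=0$ by anti-invariance and checking that the evaluation map $M\to\omega_{\oP}$, $m\mapsto\operatorname{tr}(m\eta)$, carries the generators $u_{i}$ of $M$ onto the Rosenlicht generators $\frac{\on{d}\!s_1}{s_1}-\frac{\on{d}\!s_i}{s_i}$ of $\omega_{\oP}$, hence is an isomorphism (surjective on generators between rank-one torsion-free modules). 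This is a genuinely different packaging: the paper verifies the residue condition and the pole orders branch-by-branch on $\oC^{\nu}$, whereas you reduce to a statement about $\oP$-modules that is essentially tautological given the construction $M=\omega_{\oP}\otimes\OO_{\oP}(-\mathbbm 1)$. The duality route has the advantage of making the proof structural (it makes visible why the anti-invariance and the twist by $\OO_{\oP}(\mathbbm 1)$ conspire to produce a Gorenstein point), while the Rosenlicht route is more local and elementary. You are also explicit about the right way to read the ``symbolic fractions'' --- as differentials on individual branches of $\oC^{\nu}$ --- which is exactly what the paper's normalisation subsection is there to make precise. Two small caveats: the identity $\operatorname{tr}(u_i\eta)=2u_i\eta$ should more carefully be stated as ``the $\omega_{\oP}$-component of $u_i\eta$ is half its trace,'' since the left side lives in $\omega_{\oP}$ and the right in $\opsi_*\omega_{\oC}$; and over branches where $\opsi$ is ramified ($m_i$ odd) the trace is no longer a sum over two sheets, so the uniform factor of $2$ needs a separate (but routine) check on those branches. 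Neither affects the conclusion, and the argument is correct.
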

\begin{proof}
 Follows from the above equations and Rosenlicht's description of differentials.
\end{proof}

From this we can read off the pullback of the canonical bundle of $\oC$ to its normalisation $\nu\colon C^{\nu}\to \oC$ (where $C^{\nu}$ can be viewed as a subcurve of $C$). 
\begin{corollary}\label{cor:conductor}
 Write $(C^{\nu},q_i,\bar q_i,q_j)_{\substack{i=1,\ldots,h\\j=h+1,\ldots,k}}$ for the pointed normalisation of $\oC$ at $q$. Then:
    \[\nu^*\omega_{\oC}=\omega_{C^{\nu}}\left(\sum_{i=1}^h(\frac{m_i}{2}+1)(q_i+\bar q_i)+\sum_{i=h+1}^k (m_i+1)q_i\right).\]
\end{corollary}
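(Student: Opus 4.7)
The plan is to pull back the explicit local generator $\eta=\frac{\on{d}\!s_1}{us_1}-\sum_{i=2}^{\ell}\frac{\on{d}\!s_i}{u_is_i}$ of $\omega_{\oC,q}$ provided by the preceding lemma along the normalisation map $\nu\colon C^{\nu}\to\oC$, and to read off the twist of $\omega_{C^{\nu}}$ representing $\nu^{*}\omega_{\oC}$ from the orders of its poles on each branch.

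First I would observe that the reducedness of $\oC$ forces $\delta_i=1$ for every $i$: a value $\delta_i=0$ would endow the corresponding branch of $\oC$ with a non-reduced ribbon structure, contradicting reducedness. Consequently the ``trivial'' branches indexed by $d_j$ in \S\ref{sec:local} do not appear, and the pointed normalisation is exactly $(C^{\nu},q_i,\bar q_i,q_j)_{i=1,\ldots,h;\ j=h+1,\ldots,k}$ as in the statement.

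Next I would compute $\nu^{*}\eta$ branch by branch using the explicit normalisation formulas of \S\ref{sec:local}. The expression for $\eta$ is to be read à la Rosenlicht: on each branch only the summand indexed by that branch---plus, for a branch lying over $i=1$, the $s_1$-summand---contributes non-trivially, since for every other index $j$ the function $s_j$ vanishes identically on the branch and the term $\frac{\on{d}\!s_j}{u_j s_j}$ contributes nothing to the meromorphic differential there. Concretely: on the $a_i$-branch for $i=1,\ldots,h$, where $s_i\mapsto a_i$ and (a suitable representative of) $u$ pulls back to $a_i^{m_i/2}$, one finds $\nu^{*}\eta=\pm\frac{\on{d}\!a_i}{a_i^{m_i/2+1}}$, a pole of order $m_i/2+1$; the $b_i$-branch is completely analogous. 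On the $c_j$-branch for $h<j\leq k$, where $s_j\mapsto c_j^{2}$ (so $\on{d}\!s_j\mapsto 2c_j\,\on{d}\!c_j$) and $u_j\mapsto c_j^{m_j}$, one obtains $\nu^{*}\eta=-\frac{2\,\on{d}\!c_j}{c_j^{m_j+1}}$, a pole of order $m_j+1$.

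Since the pullback of a generator of $\omega_{\oC}$ is a generator of the line bundle $\nu^{*}\omega_{\oC}$, reading these pole orders on each branch yields the claimed identification of $\nu^{*}\omega_{\oC}$ as the asserted twist of $\omega_{C^{\nu}}$. The main subtlety is rigorously justifying the Rosenlicht interpretation---namely, that cross-summands $\frac{\on{d}\!s_j}{u_j s_j}$ really do restrict to zero on branches where $s_j$ vanishes identically---which is a standard feature of dualising sheaves on reduced singular curves, dictated by the residue/trace conditions cutting out $\omega_{\oC}$ inside the meromorphic differentials on $C^{\nu}$.
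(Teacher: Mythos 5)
Your proof is correct and takes essentially the same route as the paper, which simply asserts that the pole orders can be "read off" from the explicit Rosenlicht generator $\eta$ of the preceding lemma; you carry out that read-off branch by branch using the normalisation formulas of \S\ref{sec:local}. Your preliminary observation that reducedness of $\oC$ forces $\delta_i=1$ for all $i$ (so that $k=\ell$ and no $d_j$-branches appear) is a correct and worthwhile clarification of an implicit assumption in the statement.
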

From this and Noether's formula we see that the conductor ideal of $\nu$ is:
\begin{equation}\label{eqn:conductor}
    \mathfrak{c}=\left(a_i^{\frac{m_i}{2}+1},b_i^{\frac{m_i}{2}+1},c_i^{m_i+1}\right).
\end{equation}

From this we can verify that $\oC$ is Gorenstein in a second way: it is well-known that $\oC$ is Gorenstein if and only if $\dim_{\kfield}(\OO_{C^\nu}/\mathfrak c)=2\delta$ (see e.g. \cite[Proposition VIII.1.16]{AltmanKleiman}).

Now,
\begin{equation}\label{eqn:genus_of_singularity}
    \delta=g(q)+2h+k-1,
\end{equation}
where $2h+k$ is the number of branches of $q$, and $g(q)$ is the genus of the singularity.

The latter is the same as the genus of the subcurve of $C$ contracted to it. This corresponds to a connected component of the support of $\lambda_T$. Since the following formulae are stable under edge contraction, we may as well assume that there is a single vertex $v$ in the support of $\lambda_T$, resp. irreducible component $C_v$ of $C$ contracting to $q$. The genus of this component is determined by the Riemann--Hurwitz formula:
\begin{equation}\label{eqn:RH}
    2g(C_v)+2=b+k,
\end{equation}
where $b$ is the number of branch points supported on $C_v$, and $k$ the number of odd nodes (for $\mathbf b$) adjacent to $v$.
On the other hand, balancing $\lambda_T$ at $v$ as in equation \eqref{eqn:balancing}, we find:
\begin{equation}\label{eqn:balancing_applied}
\on{div}(\lambda_T)=\sum_{i=1}^{h+k}\frac{m_i}{2}=\val(v)-2+\frac{b}{2}.\end{equation}
Finally, from the above formula for the conductor we find that 
 \begin{align*}
     \dim_{\kfield}(\OO_{C^\nu}/\mathfrak c)&=\sum_{i=1}^{h+k}m_i+2h+k &\text{by eq. \eqref{eqn:conductor}}\\
      &=2\val(v)-4+b+2h+k &\text{by eq. \eqref{eqn:balancing_applied}}\\
      &=b+k-4+2(2h+k) &\text{by a simple manipulation}\\
      &=2(g+2h+k-1)=2\delta. &\text{by eq. \eqref{eqn:RH} and eq. \eqref{eqn:genus_of_singularity}}
 \end{align*}

\section{Classification of Gorenstein hyperelliptic curves}
In this section we prove a partial converse to our previous result, namely that most Gorenstein 
hyperelliptic curves arise from our construction. We focus on the unmarked case for notational simplicity. We start by specifying what exactly we mean by a Gorenstein hyperelliptic curve.

\begin{definition}
 We say that $\opsi\colon\oC\to\oP$ is a \emph{Gorenstein hyperelliptic cover} if $\oP$ is a rational, reduced, Cohen--Macaulay projective curve; $\opsi$ is a finite (not necessarily flat) cover of degree two over every irreducible component of $\oP$; $\oC$ is a Gorenstein (not necessarily reduced) curve; there is a hyperelliptic involution $\bar\iota$ on $\oC$ with quotient $\oP$.
\end{definition}

\begin{remark}
    Every non-reduced component of $\oC$ is a necessarily \emph{split} ribbon \cite[\S1]{BayerEisenbud}.
\end{remark}

\begin{theorem}\label{thm:converse}
 Every smoothable Gorenstein hyperelliptic cover arises from the construction of \S \ref{sec:construction}.
\end{theorem}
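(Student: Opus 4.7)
The plan is to reverse-engineer the construction of \S\ref{sec:construction} starting from a smoothing. Let $(\bar{\mathcal{C}}\to\bar{\mathcal{P}})/\Delta$ be a smoothing of $\opsi\colon\oC\to\oP$ over a DVR with the hyperelliptic involution extending. The generic fibre is a smooth hyperelliptic curve over $\PP^1$ branched at $2g+2$ points; by properness of the moduli stack $\Hcal_g$, after finite base change this extends to a stable log hyperelliptic admissible cover $\psi\colon C\to P$ over $\Delta$. The identity on generic fibres extends to morphisms $\sigma\colon C\to\bar{\mathcal{C}}$ and $\tau\colon P\to\bar{\mathcal{P}}$ over $\Delta$ (by elimination of indeterminacy for rational maps from a regular two-dimensional scheme to a proper target), compatible with the involutions and the two-to-one covers.

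Next I extract the contraction datum. Since $\omega_{\oC}$ is a line bundle (Gorenstein hypothesis), the natural map $\omega_C\to\sigma^*\omega_{\oC}$ coming from the Grothendieck trace exhibits $\sigma^*\omega_{\oC}=\omega_C(\lambda)$ for an effective Cartier divisor $\lambda$ supported on the exceptional locus of $\sigma$; effectivity comes from the fact that $\oC$ is Gorenstein, so that the conductor of $\sigma$ is an invertible ideal on $C$ as in the local analysis of \S\ref{sec:local}. By $\iota$-equivariance of $\sigma$, $\lambda$ descends to a CL function $\lambda_T$ on $\Pcal=\on{trop}([C/\iota])$. The balancing equation \eqref{eqn:balancing_neg} at each $v\in\on{Supp}(\lambda_T)$ is the numerical translation of the statement that $\omega_{P^{\rm tw}}(\lambda_T)$, whose pullback to $C$ is $\omega_C(\lambda)=\sigma^*\omega_{\oC}$ by \eqref{eqn:twisted_diff_P}, has degree zero on the component $P_v$; this must hold since $\tau$ contracts $P_v$ and the line bundle descends to $\oP$.

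Having built $(\psi,\lambda_T)$, applying the construction of \S\ref{sec:construction} produces $\bar\psi'\colon\oC'\to\oP'$, which I claim is isomorphic to the given cover. The targets agree: both $\oP$ and $\oP'$ are characterized as contracting precisely the components of $P$ on which $L^{\otimes 2}=\omega_P^{\otimes 2}(\mathbf b+2\lambda_T)$ has degree zero, and the square root $\OO_{\oP}(\mathbbm 1)$ is pinned down on both sides by the rigidifications of the square root lemma. For the cover, the identification $\omega_{\oC}\cong\opsi^*\OO_{\oP}(\mathbbm 1)$ (which holds because $\sigma^*\omega_{\oC}=\omega_C(\lambda)=\psi^*\omega_{P^{\rm tw}}(\lambda_T)$) combined with Grothendieck duality for $\opsi$ and the projection formula forces
\[\opsi_*\OO_{\oC}\cong\OO_{\oP}\oplus\bigl(\omega_{\oP}\otimes\OO_{\oP}(-\mathbbm 1)\bigr)\]
as $\OO_{\oP}$-modules with involutions acting as $(+,-)$, matching the module produced by the construction.

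The main obstacle will be upgrading this to an identification of $\OO_{\oP}$-algebra structures on this module, not just module structures. Both multiplications $M^{\otimes 2}\to\OO_{\oP}$ are cosections encoding the same branch data: the divisor $\mathbf b$ together with the vertical twist $2\lambda_T$ supported on the exceptional locus of $\tau$. Compatibility with the hyperelliptic involutions reduces the ambiguity to rescaling $M$ by a global invertible function on $\oP$, i.e.\ an element of $\OO(\Delta)^\times$; matching the algebra structure on the generic fibre---where both sides reduce to the tautological double cover of $\PP^1$ branched along $\mathbf b$---then fixes the scalar, yielding the canonical isomorphism $\oC'\cong\oC$ over $\oP$. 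This shows that $\opsi$ arises from our construction applied to $(\psi,\lambda_T)$.
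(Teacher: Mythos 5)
Your proposal follows essentially the same route as the paper's proof: take a smoothing over a DVR, pass to the stable admissible cover limit after a finite base change, extract the conewise-linear function $\lambda_T$ from the relation $\Phi_C^*\omega_{\oCcal/\dvr}=\omega_{\Ccal/\dvr}(\lambda)$, apply the construction of \S\ref{sec:construction}, and then compare the output with the original family by first identifying the rational targets, then the module structures via $\omega$-duality, and finally the algebra structures via the branch data. The details differ only cosmetically (the paper invokes birational rigidity of normal surfaces to identify $\oPcal\simeq\oPcal'$, where you appeal to the universal property of the contraction; the paper leans on $\omega$-reflexivity to recover $\Mcal$ from $\Mcal\omd$, where you run Grothendieck duality and the projection formula), so there is no substantive divergence.
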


\begin{corollary}\label{cor:converse_reduced}
 Every reduced Gorenstein hyperelliptic cover arises from the construction of \S \ref{sec:construction}.
\end{corollary}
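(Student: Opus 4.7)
The plan is to deduce the corollary from Theorem \ref{thm:converse} by showing that every reduced Gorenstein hyperelliptic cover $\bar\psi\colon\bar C\to\bar P$ is smoothable. More precisely, I would construct an explicit log hyperelliptic admissible cover $(\psi\colon C\to P,\lambda_T)$ equipped with a contraction datum whose image under the construction of \S\ref{sec:construction} is $\bar\psi$. Smoothability then follows from the log smoothness of the moduli space $\mathcal H_{g,n}$ (\S 3 of \cite{Mochizuki}), since a log smooth point of a moduli stack always deforms into the interior of its stratum.

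First I would extract the combinatorial data of $(\psi,\lambda_T)$ locally at each singularity of $\bar C$. Because $\bar C$ is reduced, the non-reduced ``split-ribbon'' case does not appear and the singularities are precisely the multi-branch Gorenstein points classified in \S\ref{sec:local}. From the normalisation, Corollary \ref{cor:conductor}, and the invariants $(m_i)$ worked out there, one reads off the dual graph of a partial desingularisation $C\to\bar C$, the distribution of the branch divisor $\mathbf b$ on $P$, and the slopes of a candidate $\lambda_T$ along the exceptional components.

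Next I would verify that this combinatorial data assembles into a bona fide contraction datum, i.e.~that the balancing equation \eqref{eqn:balancing_neg} holds at every vertex of $\mathrm{Supp}(\lambda_T)$. This is a local check at each singularity and amounts to a rearrangement of the identities \eqref{eqn:RH}, \eqref{eqn:balancing_applied}, and \eqref{eqn:genus_of_singularity}: those identities were derived in \S\ref{sec:local} precisely to match the Gorenstein criterion $\dim_{\kfield}(\mathcal O^\nu/\mathfrak c) = 2\delta$ with the balancing. Once balancing holds, the local computations of \S\ref{sec:local} imply that the construction of \S\ref{sec:construction} applied to $(\psi,\lambda_T)$ reproduces $\bar\psi$ branch by branch.

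The main obstacle is the global assembly of the local data: one must choose a square-root line bundle $\mathcal O_{\bar P}(\mathbbm 1)$ consistently across the rational curve $\bar P$ and ensure that the parity of branches (which determines the twisting at odd nodes) is inherited correctly from the intrinsic double cover $\bar\psi$. Once this is in place, $(\psi\colon C\to P,\lambda_T)$ represents a point in the stack of stable log hyperelliptic admissible covers equipped with a contraction datum, and a generic deformation within its log stratum provides the required smoothing, to which Theorem \ref{thm:converse} then applies.
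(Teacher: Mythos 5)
Your approach diverges substantially from the paper's, and it has a genuine gap. The paper establishes smoothability of the \emph{given} $\bar\psi$ directly via deformation theory, in three clean steps: (1) a reduced rational Cohen--Macaulay curve $\oP$ is smoothable by a standard result; (2) the line bundle $\Mcal^\omega$ deforms along because the Picard scheme is unobstructed; (3) the multiplication map $\mu\colon\Mcal^{\otimes2}\to\OO_{\oP}$ deforms because, using that $\oC$ is reduced (so $\mu$ is generically nonzero), Serre duality forces $h^1(\Mcal^{\otimes-2})=0$. Once $\bar\psi$ is smoothable, Theorem~\ref{thm:converse} applies and produces the admissible cover and contraction datum via stable reduction of the smoothing family.

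You instead propose to reconstruct $(\psi\colon C\to P,\lambda_T)$ by hand from the local structure of $\bar C$ and then invoke log smoothness of $\Hcal_{g,n}$. The gap is in the reconstruction step. Section~\ref{sec:local} classifies the singularities that \emph{arise from the construction}; it is not an intrinsic classification of all reduced Gorenstein hyperelliptic singularities, so you cannot simply ``read off'' the partial desingularisation and the slopes of $\lambda_T$ from a given $\bar C$. The lemmas preceding Theorem~\ref{thm:converse} (rank-one reflexivity of $\Mcal$, $\Mcal^\omega$ a line bundle, $\opsi^*\Mcal^\omega\simeq\omega_{\oC}$) are a start, but they do not supply a semistable model of $\bar C$ nor the log structure on it. Even granting that the local data can be extracted, the global assembly -- choosing the dual graph of $P$, the square-root line bundle $\OO_{\oP'}(\mathbbm1)$ at odd nodes, and a CL function satisfying balancing everywhere -- is exactly the hard part, which you flag as ``the main obstacle'' but do not resolve. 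This is precisely what the paper's stable-reduction step in Theorem~\ref{thm:converse} handles canonically. Finally, your invocation of Theorem~\ref{thm:converse} at the end is logically redundant: if you had explicitly exhibited $(\psi,\lambda_T)$ mapping to $\bar\psi$, you would already have proved the corollary without it, and if you had not, Theorem~\ref{thm:converse} still requires smoothability of $\bar\psi$ as a hypothesis, which in your argument is deduced only \emph{after} the explicit construction. The deformation-theoretic route in the paper avoids all of these difficulties.
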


In presence of a $G=\ZZ/2\ZZ$-action on $\oC$, we may split the structure (in fact, any equivariant) sheaf into eigenspaces for the $G$-action (on every $G$-stable open). We can thus write
\[\opsi_*\OO_{\oC}=\OO\oplus\Mcal,\]
where $\OO$ denotes the $1$-eigenspace, and $\Mcal$ the $-1$. By assumption, $\bar\iota$-invariant functions descend to $\oP$, whence we can identify $\OO$ with $\OO_{\oP}$. In particular, the finite cover $\opsi$ admits a trace map even when it is not flat. 

We know that $\Mcal$ is some sheaf of pure rank one on $\oP$. Our next goal is to show that $\Mcal$ is a twist of $\omega_{\oP}$ by a line bundle. We recall that, in his study of generalised divisors, Hartshorne has introduced a generalisation of reflexivity for sheaves which is useful when the base scheme is not Gorenstein. Denote by $-\omd$ the functor $\mathcal{H}om(-,\omega)$, i.e. $\omega$-dualisation. A sheaf $\Fcal$ is \emph{$\omega$-reflexive} if $\Fcal\to\Fcal^{\omega\omega}$ is an isomorphism. This implies that $\Fcal$ is torsion-free \cite[Lemma 1.4]{HarGenDiv}.

\begin{remark}
 It follows from Grothendieck's duality for a finite morphism $f\colon X\to Y$ that
 \[(f_*\Fcal)\omd=\homs_{\OO_Y}(f_*\Fcal,\omega_Y)=f_*\homs_{\OO_X}(\Fcal,f^!\omega_Y)=f_*(\Fcal\omd).
 \]
\end{remark}

In particular, 
$\opsi_*\omega_{\oC}=\homs_{\OO_{\oP}}(\opsi_*\OO_{\oC},\omega_P)=\omega_{\oP}\oplus\Mcal\omd$.

\begin{remark}\label{rmk:antiinvariant2}
Since $\oP$ is rational, $\omega_{\oP}$ has no global section (by Serre duality). It follows that (global regular) sections of the dualising sheaf on $\oC$ can be identified with sections of $\Mcal\omd$ on $\oP$, in particular they are all $\iota$-anti-invariant. This generalises Remark \ref{rmk:antiinvariant1} beyond the case of smooth curves.
\end{remark}

Since $\OO_{\oC}$ and $\OO_{\oP}$ are both $\omega$-reflexive, we may conclude that the same holds true for $\Mcal$.

\begin{lemma}
 $\Mcal$ is a rank-one, $\omega$-reflexive sheaf.
\end{lemma}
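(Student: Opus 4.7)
The plan is to verify the two assertions separately, relying on the decomposition $\opsi_*\OO_{\oC}=\OO_{\oP}\oplus\Mcal$ and Grothendieck duality as already used above.

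For the rank-one claim, I would first observe that $\opsi$ is finite and generically of degree two on every component of $\oP$, so $\opsi_*\OO_{\oC}$ has generic rank two on each component. Since we work over $\Spec(\ZZ[\tfrac{1}{2}])$, the eigenspace decomposition for the $\ZZ/2$-action is valid, and the invariant summand is exactly $\OO_{\oP}$ (the quotient map is $\opsi$). Hence the anti-invariant summand $\Mcal$ is of pure generic rank one on every component, which already forces torsion-freeness at codimension zero; purity as a sheaf will follow from the reflexivity argument below.

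For $\omega$-reflexivity, the chain of reductions is as follows. Since $\oC$ is Gorenstein, $\OO_{\oC}$ is trivially $\omega_{\oC}$-reflexive. Applying $\opsi_*$ and using the duality identity
\[(\opsi_*\Fcal)\omd=\opsi_*(\Fcal\omd)\]
recorded in the remark just above, the natural biduality map for $\OO_{\oC}$ pushes forward to an isomorphism
\[\opsi_*\OO_{\oC}\xrightarrow{\;\sim\;}(\opsi_*\OO_{\oC})^{\omega\omega}.\]
Independently, $\oP$ is reduced and Cohen--Macaulay of dimension one, hence $\OO_{\oP}$ is itself $\omega_{\oP}$-reflexive by \cite[Proposition~1.6]{HarGenDiv}. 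Since $\omega$-dualisation is additive on direct sums, the biduality map on $\opsi_*\OO_{\oC}=\OO_{\oP}\oplus\Mcal$ splits as the direct sum of the biduality maps on each summand; the map on $\OO_{\oP}$ is already an isomorphism, and the map on $\opsi_*\OO_{\oC}$ is too, so the map $\Mcal\to\Mcal^{\omega\omega}$ must also be an isomorphism. By \cite[Lemma~1.4]{HarGenDiv}, $\omega$-reflexivity in turn implies torsion-freeness, completing the purity claim.

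The only subtle point is the splitting of biduality along the direct sum decomposition, but since $-\omd$ is an additive functor and the canonical biduality natural transformation is additive, this is automatic. I do not anticipate any serious obstacle: every ingredient (Gorenstein $\Leftrightarrow$ $\omega$ invertible, the trace/co-trace decomposition in characteristic $\neq 2$, Grothendieck duality for finite morphisms, and Hartshorne's reflexivity criterion on CM curves) is already in place in the excerpt.
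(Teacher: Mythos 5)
Your proposal is correct and follows exactly the route the paper has in mind: the paper gives no displayed proof of this lemma, but the sentence immediately preceding it---``Since $\OO_{\oC}$ and $\OO_{\oP}$ are both $\omega$-reflexive, we may conclude that the same holds true for $\Mcal$''---together with the duality remark $(\opsi_*\Fcal)\omd=\opsi_*(\Fcal\omd)$, is precisely the argument you have fleshed out. Your observation that $\Mcal$ has generic rank one on each component because $\opsi$ is generically degree two, and that torsion-freeness then follows from \cite[Lemma~1.4]{HarGenDiv}, is the intended justification of the rank/purity claim.
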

\begin{lemma}
 $\Mcal\omd$ is a line bundle, except where $\opsi$ maps a node to a node with ramification.
\end{lemma}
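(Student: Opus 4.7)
The claim is local on $\oP$, so fix a closed point $p \in \oP$ and set $A = \hat\OO_{\oP,p}$. If no preimage of $p$ is fixed by $\bar\iota$, then $\opsi$ is étale above $p$, $\Mcal$ is locally free, and there is nothing to prove; so assume there is a (necessarily unique) fixed preimage $q \in \oC$, and set $B = \hat\OO_{\oC,q} = A \oplus M$. As noted just before the lemma, Grothendieck duality identifies $\Mcal\omd$ with the $\bar\iota$-anti-invariant summand $\omega_B^-$ of $\omega_B = \omega_A \oplus \Mcal\omd$.

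The plan is to produce an $\bar\iota$-anti-invariant generator $\eta$ of $\omega_B$ as a $B$-module. Given such $\eta$, the inclusion $A\eta \subseteq \omega_B^-$ is clear, and any anti-invariant $b\eta$ satisfies $-\bar\iota(b)\eta = -b\eta$, forcing $b \in A$; hence $\Mcal\omd = A\eta \cong A$ is a line bundle. To construct $\eta$, I would start from any $B$-generator $\eta_0$ of $\omega_B$ and form the average $\eta = \tfrac12(\eta_0 - \bar\iota^*\eta_0) \in \omega_B^-$, defined because $2$ is invertible. This $\eta$ is still a $B$-generator precisely when its image in the one-dimensional fibre $\omega_{\oC}|_q$ is nonzero, i.e.\ exactly when $\bar\iota^*$ acts as $-1$ on $\omega_{\oC}|_q$. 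Everything therefore reduces to determining this sign.

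I would run the sign computation as a case analysis on the local type of the fixed point $q$: (i) if $q$ is a smooth point, then $\bar\iota$ acts by $-1$ on $T_q\oC$ (a nontrivial order-two action with isolated fixed point, in characteristic $\neq 2$), hence also on the dual $\omega_{\oC}|_q$; (ii) if $q$ is a node and $\bar\iota$ swaps its two branches, a direct computation in $B = \kfield\lbb X, Y\rbb/(XY)$ with $\bar\iota\colon X \leftrightarrow Y$ gives $\bar\iota^*(\tfrac{dX}{X}) = \tfrac{dY}{Y}$, so the Rosenlicht generator of $\omega_B$ with opposite residues at the two branches pulls back to its negative; (iii) if $q$ lies on a ribbon with local model $B = A[u]/(u^2)$, $\bar\iota(u) = -u$, then the map $\eta\colon a + bu \mapsto b$ is a $B$-generator of $\omega_B$ and is manifestly anti-invariant; (iv) for $\ell$-branch fixed points with $\ell \geq 3$ analogous to those of \S\ref{sec:construction}, the explicit local generator of $\omega_{\oC}$ exhibited in \S\ref{sec:local} satisfies $\bar\iota^*\eta = -\eta$; (v) in the one remaining configuration, $q$ is a node both of whose branches are preserved by $\bar\iota$ acting as $-1$, with local model $A = \kfield\lbb x,y\rbb/(xy) \hookrightarrow B = \kfield\lbb X, Y\rbb/(XY)$ via $x = X^2$, $y = Y^2$. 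Here $\bar\iota^*(\tfrac{dX}{X}) = \tfrac{dX}{X}$ gives action $+1$ on the fibre, and $\opsi(q)$ is a node of $\oP$ with ramification---precisely the exception in the statement.

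The main obstacle I anticipate is completeness of the case analysis. For an arbitrary Gorenstein hyperelliptic cover, one must rule out further local types of $\bar\iota$-fixed singularities beyond the list above; this requires combining Gorenstein-ness of $\oC$, reducedness and Cohen--Macaulayness of $\oP$, and the degree-two structure of $\opsi$. In the smoothable case this is presumably subsumed by the explicit tabulation of \S\ref{sec:local}, so a complete proof may either reduce to the smoothable situation or run the analysis of \S\ref{sec:local} in reverse to pin down the local models intrinsically.
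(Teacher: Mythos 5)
Your reduction of the lemma to the sign of $\bar\iota^*$ on the one-dimensional fibre $\omega_{\oC}|_q$ is sound, and the sign computations in cases (i), (ii), (iii) and (v) are correct; in particular you correctly identify the exceptional $+1$ sign at a node fixed with ramification. The paper handles the cases where $p$ is smooth or a node with $\opsi$ flat more directly (miracle flatness plus Gorensteinness of $\oP$ give $\Mcal$ and $\omega_{\oP}$ locally free, hence $\Mcal\omd$ locally free), but your sign argument covers the same ground.

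The genuine gap is exactly the one you flag, and it is not minor. For an arbitrary Gorenstein hyperelliptic cover you have no a priori classification of the $\bar\iota$-fixed local types of $\oC$, so your case list is not known to be exhaustive; and case (iv) is circular, because the $\ell$-branch local models of \S\ref{sec:local} are precisely those produced \emph{by} the construction, which is what Theorem~\ref{thm:converse} (and hence this lemma, used in its proof) is trying to show is the general situation. Reducing to the smoothable case is also unavailable, since the lemma is needed before any smoothing is produced. The step you are missing is the paper's argument for the remaining case, $\oP$ non-Gorenstein at $p$: take any generator $\eta$ of $\omega_{\oC}$ at $q$ (one exists since $\oC$ is Gorenstein) and decompose it into eigencomponents $\eta = \eta_1 + \eta_{-1}$. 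If the sign on the fibre were $+1$, then $\eta_1$ would be a generator of $\omega_{\oC}$; being $\bar\iota$-invariant with trivial annihilator it would exhibit $\omega_{\oP}$ as the free module $\hat{\OO}_{\oP,p}\cdot\eta_1$, contradicting non-Gorensteinness of $\oP$ at $p$. So the sign is $-1$, your averaged section $\eta_{-1}$ is a generator, and $\Mcal\omd\cong\hat{\OO}_{\oP,p}$ is free. In short: use the hypothesis that $\oP$ is not Gorenstein, rather than a classification of singularities, to pin down the sign.
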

\begin{proof}
 We may work locally around a closed point $p$ of $\oP$.
 If $\oP$ is smooth at $p$, then $\opsi$ is flat by ``miracle flatness'', so $\Mcal$ is itself a line bundle, and $\Mcal\omd$ is as well.

 If $p$ is a node, we consider two cases: either $\opsi$ is flat over $p$, in which case we can conclude as before\footnote{All singularities of the form $\kfield\lbb x,y,z\rbb/(xy,z^2-x^\alpha-y^\beta)$ fall under this category, e.g. $D_k$-singularities when $\alpha=1$.}; or $\opsi$ is not flat. In this case we claim that $\oC$ has a node at the preimage $n$ of $p$, and $\opsi$ is ramified at $n$ on both branches. To show this, we are going to normalise $\oP$ and $\oC$ simultaneously. Indeed, $\Mcal$ is not a line bundle, but there is a line bundle $\Mcal'$ on the normalisation $\nu\colon \oP'\to\oP$ such that $\Mcal=\nu_*\Mcal'$\cite[Proposition 10.1]{OdaSheshadri}. Consider the following exact sequence:
\begin{equation}\label{eqn:normalisation}
    0\to \OO_{\oP}\oplus\Mcal\to \nu_*(\OO_{\oP'}\oplus\Mcal')\to \kfield_p\to 0
\end{equation}
 
We may endow the second term with a $\nu_*\OO_{\oP'}$-algebra structure induced by the one of $\OO_{\oC}$. Indeed, 
there is always a map:
\[(\nu_*\Mcal')^{\otimes 2}\to \nu_*(\Mcal'^{\otimes 2}),\]
 which in this case a local computation shows to be surjective. In fact, we may as well replace $(\nu_*\Mcal')^{\otimes 2}$ by $\on{Sym}^2(\nu_*\Mcal')$. We get the desired multiplication map by lifting:
 \bcd
\on{Sym}^2\Mcal\ar[r,"\mu"]\ar[d,twoheadrightarrow] & \OO_{\oP}\ar[d] \\
\nu_*(\Mcal'^{\otimes 2})\ar[r,dashed,"{\mu'}"] & \nu_*\OO_{\oP'}
 \ecd
 Explicitly, if $\Mcal'$ is generated as $\OO_{\oP'}$-module by an element $(x,y)$ (and its pushforward along $\nu$ is generated as an $\OO_{\oP}$-module by two elements $x=(1,0)\cdot(x,y)$ and $y=(0,1)\cdot(x,y)$), its square $\Mcal'^{\otimes 2}$ is generated by $(x^2,y^2)$ as an $\OO_{\oP'}$-module, and by $x^2$ and $y^2$ as an $\OO_{\oP}$-module. The $\OO_{\oP}$-module $\on{Sym}^2\Mcal$ has an extra generator $xy$. On the other hand, locally, $\OO_{\oP}\simeq \kfield[s,t]/(st)$ (while $\OO_{\oP'}\simeq \kfield[s]\oplus\kfield[t]$), and $sy=tx=0$ implies that $\mathfrak{m}_p \cdot xy=0$, hence the multiplication map $\mu\colon\on{Sym}^2\Mcal\to\OO_{\oP}$ must send this element to $0$. It follows that the multiplication map $\mu'\colon \nu_*(\Mcal'^{\otimes 2})\to \nu_*\OO_{\oP'}$ is well-defined. Moreover, it clearly lifts to a map of $\OO_{\oP'}$-modules.
We thus get the desired double cover $\oC'\to\oP'$, together with a birational morphism $\oC'\to\oC$. Since $\oP'$ is smooth (disconnected), the former map is flat and $\oC'$ is smooth, so the latter map is the normalisation of $\oC$. Equation \eqref{eqn:normalisation} shows that $\oC$ has $\delta$-invariant $1$ (and at least two branches) at $n$, so $n$ must be a node, and moreover the cover is ramified at $n$ on both branches.

Finally, if $\oP$ is not Gorenstein at $p$ we may argue as follows. Let $q$ be the point of $\oC$ over $p$ (if there were two, $\opsi$ would be a local isomorphism, contradicting the fact that $\oC$ is Gorenstein). The group $G$ acts on $\omega_{\oC}$. By assumption, $\omega_{\oC}$ admits a single generator at $q$ that we will call $\eta$. Consider the eigenspace decomposition $\eta=\eta_1+\eta_{-1}$. If $\eta_{-1}=0$, then $\omega_{\oP}$ is generated by $\eta_1$ as an $\OO_{\oP}$-module, which is a contradiction. Since $\eta$ generates 
$\omega_{\oC}$ and $\eta_{-1}$ is itself a section of $\omega_{\oC}$, we can write $\eta_{-1}=f\eta$. We claim that $f(q)\neq 0$, so we can as well take $\eta_{-1}$ as a generator of $\omega_{\oC}$. Decomposing $f$ and $\eta$ into their homogeneous pieces, we write:
\[\eta_{-1}=f_{-1}\eta_1+f_1\eta_{-1}.\]

Since $\bar\iota^*f_{-1}(q) = -f_{-1}(q)$, which implies $f_{-1}\in \mathfrak{m}_q$, we have to check that $f_1(q)\neq 0$. Were $f_1(q) = 0$, then $1 - f_1$ would be a unit, and we could write:
\[\eta_{-1}=\frac{f_{-1}}{1-f_1(q)}\eta_1,\]
so we could take $\eta_1$ as a generator of $\omega_{\oC}$, which is a contradiction as above. This shows that the generator of $\omega_{\oC}$ can be assumed to be of pure weight $-1$, hence $\Mcal^\omega$ has a single generator as an $\OO_{\oP}$-module.
\end{proof}

\begin{remark}
    As in the previous section, the failure of $\Mcal\omd$ to be a line bundle can be cured by introducing an orbifold structures at the faulty nodes, a passage that we leave to the avid reader.
\end{remark}

\begin{lemma}
 $\opsi^*\Mcal\omd\simeq\omega_{\oC}.$
\end{lemma}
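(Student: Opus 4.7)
The plan is to imitate the dual computation from Section 2 (the construction direction), where we showed $\omega_{\oC}\cong\opsi^*\OO_{\oP}(\mathbbm 1)$. First I would take as input the Grothendieck duality identity established just above the statement, namely
\[
\opsi_*\omega_{\oC}\;=\;\omega_{\oP}\oplus\Mcal\omd,
\]
and use the inclusion of the second summand $\Mcal\omd\hookrightarrow\opsi_*\omega_{\oC}$. By the pullback--pushforward adjunction for the finite morphism $\opsi$, this corresponds to a canonical $\OO_{\oC}$-linear map
\[
\eta\colon \opsi^*\Mcal\omd\longrightarrow\omega_{\oC}.
\]

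Next I would show $\eta$ is an isomorphism by pushing forward and invoking that $\opsi_*$ is faithful (since $\opsi$ is affine). By the projection formula and the eigenspace decomposition $\opsi_*\OO_{\oC}=\OO_{\oP}\oplus\Mcal$,
\[
\opsi_*\opsi^*\Mcal\omd\;=\;\Mcal\omd\otimes_{\OO_{\oP}}\opsi_*\OO_{\oC}\;=\;\Mcal\omd\;\oplus\;(\Mcal\omd\otimes\Mcal).
\]
Unwinding the definition of $\eta$, the map $\opsi_*\eta$ sends the first summand to $\Mcal\omd\subset\opsi_*\omega_{\oC}$ by the identity, and on the second summand it is the natural evaluation pairing
\[
\Mcal\omd\otimes\Mcal\;\longrightarrow\;\omega_{\oP},
\]
built into the $\OO_{\oC}$-module structure on $\omega_{\oC}$ (invariant times anti-invariant lands in the anti-invariant part, while anti-invariant times anti-invariant lands in $\omega_{\oP}$).

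The crux is therefore to show this evaluation pairing is an isomorphism. By the previous lemma, $\Mcal\omd$ is a line bundle away from the exceptional nodes, so reflexivity of $\Mcal$ gives $\Mcal=(\Mcal\omd)\omd=\omega_{\oP}\otimes(\Mcal\omd)^{-1}$, and then $\Mcal\otimes\Mcal\omd\cong\omega_{\oP}$ via the tautological evaluation; the two maps agree by construction. This gives $\opsi_*\eta\colon\Mcal\omd\oplus\omega_{\oP}\xrightarrow{\sim}\omega_{\oP}\oplus\Mcal\omd$, hence $\eta$ is an isomorphism.

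The main obstacle is handling the ``bad'' nodes where $\Mcal\omd$ fails to be a line bundle (the case of a node mapping to a node with ramification), since there reflexivity alone does not immediately supply the inverse line bundle. I would treat this either by the explicit local normalisation computation of Section 3 (which exhibits $\omega_{\oC}$ as generated by an anti-invariant differential, matching the generator of $\opsi^*\Mcal\omd$), or by passing to the orbifold modification alluded to in the remark after the previous lemma, where $\Mcal\omd$ becomes a genuine line bundle and the above argument applies verbatim; the identification then descends back to the coarse spaces because $\eta$ is already defined globally and agrees with the orbifold $\eta$ on the complement of the exceptional points.
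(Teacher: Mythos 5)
Your proof is correct and follows essentially the same route as the paper: adjunction gives the map $\opsi^*\Mcal\omd\to\omega_{\oC}$, one checks it after applying $\opsi_*$, the projection formula decomposes $\opsi_*\opsi^*\Mcal\omd$ into $\Mcal\omd\oplus(\Mcal\omd\otimes\Mcal)$, and the crux reduces to showing the evaluation $\Mcal\omd\otimes\Mcal\to\omega_{\oP}$ is an isomorphism using $\omega$-reflexivity together with the fact that $\Mcal\omd$ is a line bundle (with the bad nodes handled by the orbifold modification, exactly as the paper's preceding remark suggests). The only cosmetic difference is in that last sub-step: the paper fits the evaluation into a short exact sequence with a torsion cokernel $\Qcal$ and shows $\Qcal=0$ by $\omega$-dualization, whereas you identify $\Mcal\cong(\Mcal\omd)^{\omega}$ directly and recognize the evaluation as the tautological one for the line bundle $\Mcal\omd$; both amount to the same use of reflexivity.
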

\begin{proof}
 By adjunction there exists a morphism \[\opsi^*\Mcal\omd\to\opsi^*\opsi_*\omega_{\oC}\to\omega_{\oC}.\]
 Since $\opsi$ is finite, it is enough to check that the composite is an isomorphism after pushing forward along $\opsi$. Since $\Mcal\omd$ is a line bundle, we may apply the projection formula to compute:
 \[\opsi_*\opsi^*\Mcal\omd=\Mcal\omd\otimes\opsi_*\OO_{\oC}=\Mcal\omd\otimes(\OO_{\oP}\oplus\Mcal).\]
 Since $\Mcal\omd$ is a line bundle, $\Mcal\omd\otimes\Mcal$ is also a rank-one torsion-free, hence we have a short exact sequence:
 \[0\to\Mcal\omd\otimes\Mcal\xrightarrow{\on{ev}}\omega_{\oP}\to\Qcal\to0,\]
 where $\Qcal$ is a torsion sheaf. By taking $\omega$-duals, we get:
 \[0\to\homs(\omega_{\oP},\omega_{\oP})=\OO_{\oP}\to\homs(\Mcal\omd\otimes\Mcal,\omega_{\oP})\to\mathcal{E}xt^1(\Qcal,\omega_{\oP})\to0.\]
 Since $\Mcal\omd$ is a line bundle, the first arrow is an isomorphism, which shows that $\Qcal$ vanishes. We conclude that
 \[\opsi_*\opsi^*\Mcal\omd=\Mcal\omd\oplus\Mcal\omd\otimes\Mcal=\Mcal\omd\oplus\omega_{\oP}=\opsi_*\omega_{\oC}.\]
\end{proof}

\begin{proof}[Proof of Theorem \ref{thm:converse}]
Consider a smoothing $\bar\Psi\colon\oCcal\to\oPcal$ of $\opsi$ over $\dvr$, and mark the generic fibre $\oPcal_\eta$ with the branch divisor of $\bar\Psi$. 
After a finite base change if necessary, let $(\Pcal,\Bcal)$ be the unique limit of $(\oPcal_{\eta},\overline\Bcal_{\eta})$ as a stable curve with unordered markings. Let $\Psi\colon\Ccal\to\Pcal$ be the associated hyperelliptic admissible cover with the minimal log structure. Let $\Phi_P\colon\Pcal\to\oPcal$ denote the contraction, and similarly $\Phi_C$.

Since $\Ccal$ is a normal surface, and by reflexivity of the sheaves involved, we notice that $\omega_{\Ccal/\dvr}$ and $\Phi_C^*\omega_{\oCcal/\dvr}$ differ only by a vertical divisor, supported on the central fibre. We may hence write:
\[\Phi_C^*\omega_{\oCcal/\dvr}=\omega_{\Ccal/\dvr}(\lambda),\]
for some conewise-linear function $\lambda\in H^0(\Ccal,\overline{M}_\Ccal)$, a priori only with the divisorial log structure of $\Ccal$ with respect to its central fibre. On the other hand, let $\Pcal^{\rm tw}$ denote the orbicurve $[\Ccal/\iota]$. Since $\omega_{\oC}=\opsi^*\Mcal\omd$, and $\omega_C=\psi^*\omega_{P^{\rm tw}}$, we deduce that their difference is also pulled back from $P^{\rm tw}$. Hence $\lambda$ is pulled back from $\lambda_T$ on $\Pcal^{\rm tw}$ with its divisorial log structure.

We may now apply our construction to $(\Psi,\lambda_T)$, thus obtaining a Gorenstein hyperelliptic curve $\oPsi'\colon\oCcal'\to\oPcal'$, fitting in the following diagram:
\bcd
&\Ccal\ar[dl,"\Phi_C" above=.2cm]\ar[d,"\Psi"]\ar[dr,"\Phi_C'"]&\\
\oCcal\ar[d,"\oPsi"]&\Pcal\ar[dl,"\Phi_P" above=.2cm]\ar[dr,"\Phi_P'"]&\oCcal'\ar[d,"\oPsi'"]\\
\oPcal\ar[rr,dashed,"\beta"] &&\oPcal'\\
\ecd

Observe that $\oPcal$ and $\oPcal'$ are normal surfaces, and the exceptional loci of $\Phi_P$ and $\Phi_P'$ are the same, 
so we may find an isomorphism $\beta\colon\oPcal\simeq\oPcal'$ commuting with the $\Phi_P$'s by birational rigidity \cite[Lemma 1.15]{Debarre}. Moreover, $\beta^*\OO_{\oPcal'}(\mathbbm 1)\simeq\Mcal\omd$.

Now, by $\omega$-reflexivity, we recover $\Mcal\simeq\beta^*\omega_{\oPcal}(-\mathbbm 1)$, and therefore $\oPsi_*\OO_{\oCcal}=\beta^*(\oPsi'_*\OO_{\oCcal'})$. 
The branch divisor is determined by the image of $\mathcal B$, and by the components of the central fibre that are contained in the support of $\lambda$ without being contracted by $\Phi$ (by Riemann--Hurwitz), hence we conclude that there is also an isomorphism $\alpha\colon\oCcal\simeq\oCcal'$ covering $\beta$.
\end{proof}

\begin{proof}[Proof of Corollary \ref{cor:converse_reduced}]
We are left to show that $\opsi$ can be smoothed out when $\oC$ is reduced. We will proceed step by step by showing that the various ingredients of this moduli problem are unobstructed.

The reduced rational curve $\oP$ is smoothable, see \cite[Example 29.10.2]{HarDef}.

The line bundle $\Mcal\omd$ can be smoothed out since the relative Picard scheme of a curve is unobstructed. Consequently, the structure sheaf of $\oC$ can be smoothed out by taking its $\omega$-dual $\Mcal$.

Finally, the multiplication map $\mu$ is a cosection of $\Mcal^{\otimes2}$. 
Consider the pairing 
\[H^0(\Mcal^{\otimes -2})\times Hom(\Mcal^{\otimes -2},\omega_{\oP})\to H^0(\omega_{\oP})=0\]
by composition. Since $\oC$ is reduced by assumption, $\mu$ does not vanish generically on any component of $\oP$. It follows that every section of $\mathcal{H}om(\Mcal^{\otimes -2},\omega_{\oP})$ must vanish generically, and since this sheaf is torsion-free, it is zero \emph{tout court}. By Serre duality $h^1(\Mcal^{\otimes -2})=0$, hence deformations of $\mu$ are also unobstructed.

\end{proof}

\begin{remark}
 We expect the result to hold for all Gorenstein hyperelliptic curves, but we have not been able to prove the smoothability of non-reduced curves yet. On the other hand, these curves can be dispensed with as far as our application to differentials is concerned.
\end{remark}

\section{The differential descent conjecture}\label{sec:GRC}

\subsection{Abelian differentials in general} The moduli space of Abelian differentials has at most three connected components (depending on the multiplicity $\mu$ of the zeroes) \cite{KontsevichZorich}. In general, connected components of the space of multiscale differentials are not irreducible. The \emph{global residue condition} (GRC) was introduced in \cite{BCGGM16} to single out the \emph{smoothable} differentials. Roughly speaking, it says that the sum of the residues at poles of level $i$ that are joined by a connected subcurve at level $i+1$ must vanish, despite the possibility that the corresponding nodes belong to different subcurves at level $i$. The proof of necessity goes by cutting the generic fibre of a smoothing along the vanishing cycle corresponding to these nodes, and applying Stokes' theorem to compute the integral of the abelian differential on the resulting surface with boundary. The proof of sufficiency is more complicated, and based on a refined \emph{plumbing} construction. With the logarithmic understanding of the moduli space of \emph{generalised} multiscale differentials reached in \cite{Chen2,Tale}, the GRC remains the only ingredient of \cite{BCGGM} relying on transcendental techniques. A purely algebraic description of smoothable differentials is contained in the following conjecture, originally due to Ranganathan and Wise. 
\begin{conjecture}[Gorenstein curves and smoothable differentials]\label{conj} \leavevmode
Let $(C,\eta)$ be a logarithmic rubber differential with tropicalisation $\blambda$. Then $\eta$ is smoothable if and only if \begin{enumerate}[label=(\roman*)]
    \item for every level $i$, the truncation $\lambda_i$ of $\blambda$ (as in \S \ref{sec:levels}) is a realisable tropical differential;
    \item there exists a logarithmic modification $\widetilde{C}\to C$, a natural extension $\tilde\eta$ of the pullback of $\eta$ to $\widetilde C$, and a reduced Gorenstein contraction $\sigma\colon \widetilde C\to\oC_i$ such that $\sigma^*\omega_{\oC_i}=\omega_C(\lambda_i)$, and
    \item the differential $\tilde\eta_i$ at level $i$ descends to a local generator of $\omega_{\oC_i}$. 
\end{enumerate}
\end{conjecture}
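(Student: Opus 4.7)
The plan is to attack the two directions separately, using Theorem \ref{thm:construction} and Theorem \ref{thm:converse} as templates that must be generalised beyond the hyperelliptic case, and to run an induction on the depth $N$ of the level filtration in both directions.

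For the forward direction (smoothable implies (i)--(iii)), condition (i) is purely tropical: restricting any realisation of $\blambda$ to the subcurve of level at least $i$ and shifting produces a realisation of $\lambda_i$. For (ii) and (iii), suppose $\pi\colon\Ccal\to\Delta$ is a smoothing of $(C,\eta)$. I would first pass to a logarithmic modification $\widetilde\Ccal\to\Ccal$ playing the role of the square-root rigidification of \S\ref{sec:construction}, and construct $\oC_i$ as $\underline{\on{Proj}}$ of the graded algebra $\bigoplus_{k\geq 0}\pi_*\omega_{\widetilde\Ccal/\Delta}(\lambda_i)^{\otimes k}$. The log modification is needed exactly to secure flatness and base-change compatibility, which the naive Proj construction fails to have (as emphasised in the introduction after Theorem A). The descent of $\tilde\eta_i$ and the identity $\sigma^*\omega_{\oC_i}=\omega_{\widetilde C}(\lambda_i)$ then follow tautologically, because the differential itself provides a section of the line bundle being contracted.

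For the backward direction the natural strategy is induction on $N$. At $N=0$ the differential is holomorphic and the assertion reduces to unobstructedness of deformations of a Gorenstein curve equipped with a nonvanishing generator of its dualising sheaf, consistent with realisability in (i); this mirrors the smoothability arguments used in the proof of Corollary \ref{cor:converse_reduced}. For the inductive step, a smoothing of $\oC_i$ into a one-level-shallower stratum must be lifted through the chain $\widetilde C\to\oC_{N-1}\to\cdots\to\oC_0$; since $\omega_{\widetilde C}(\lambda_{i+1})$ is trivial on the higher-level subcurve by the level-graph structure, the reduced Gorenstein hypothesis in (ii) should make this lift unobstructed and compatible with the previously lifted differential. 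The compatibility across levels is precisely what forces the ordered sequence of contractions in (ii).

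The main obstacle is twofold. First, the construction of $\oC_i$ must be carried out without the hyperelliptic crutch of a rational double-cover target $\oP$; the most promising approach is to adapt the direct logarithmic presentation of $\OO_{\oC}$ from \cite{Boz21} to the rubber-differential setting, so that $\oC_i$ is built intrinsically from $(\widetilde C,\lambda_i,\tilde\eta_i)$ rather than via descent along a double cover. Second, one must match condition (ii) with the global residue condition of \cite{BCGGM16}: applying the residue theorem around each singular point of $\oC_i$ coming from contracted higher-level components shows that the residues of $\tilde\eta_i$ at the pre-images of that point sum to zero, recovering the GRC from (ii); the converse implication, that the GRC is precisely what is needed to glue the local Gorenstein models classified in \S\ref{sec:local} into a global contraction, is the essential algebraic replacement for the transcendental plumbing of \cite{BCGGM}, and is where I expect the technical heart of the argument to reside.
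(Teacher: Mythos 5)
The statement you are trying to prove is not a theorem of the paper at all: it is Conjecture~\ref{conj}, which the authors explicitly leave open and attribute to Ranganathan and Wise. The paper proves only the hyperelliptic, $\iota$-anti-invariant case (Proposition~\ref{prop:hypdiff}), and there the whole argument leans on structure that is unavailable in general: the rational target $P$, the double cover, the line bundle $L$ with vanishing higher cohomology, and Serre duality on the rational curve $\oP$ (this is also what drives the smoothability argument in Corollary~\ref{cor:converse_reduced}). So there is no proof in the paper to compare yours against, and your proposal does not close the gap the conjecture represents; it is a roadmap whose decisive steps are deferred, most visibly in your own closing sentence, where the algebraic replacement for the plumbing of \cite{BCGGM} --- i.e.\ the backward direction, that conditions (i)--(iii) imply smoothability --- is exactly ``where I expect the technical heart of the argument to reside.'' The inductive ``unobstructedness'' claims you invoke there are unsupported: outside the hyperelliptic setting there is no analogue of the cohomological vanishing that makes the deformation problem unobstructed, and this is precisely the open content of the conjecture.

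There are also concrete problems in the forward direction as you set it up. The descent of $\tilde\eta_i$ to a local generator of $\omega_{\oC_i}$ does not ``follow tautologically'': condition (iii) is a genuine linear constraint on the differential, as Example~\ref{exa:nonhyp} shows (every choice of $\alpha_i$ gives a generalised multiscale differential, but only $\alpha_1=-\alpha_2$ descends to the tacnodes), and in the hyperelliptic case the paper needs Corollary~\ref{cor:conductor} and the explicit local models of \S\ref{sec:local} to verify it. Realisability of the truncations $\lambda_i$ in the sense of \cite{MUW} is likewise not the purely formal restriction-and-shift you describe. Finally, your construction of $\oC_i$ as $\underline{\on{Proj}}$ of $\bigoplus_k\pi_*\omega_{\widetilde\Ccal/\Delta}(\lambda_i)^{\otimes k}$ is exactly the naive strategy the paper rejects for lack of flatness and base-change compatibility; the logarithmic modification in Conjecture~\ref{conj} serves a different purpose (making the twist trivial on upper levels and avoiding ribbons), and it is not a substitute for the missing base-change argument --- in the paper that role is played by contracting the rational curve $P$ first and rebuilding $\oC$ as a double cover, or by the direct logarithmic presentation of \cite{Boz21}, neither of which you actually carry out in the general setting.
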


Here $\widetilde{C}_i$ is determined by $\eta$ as follows, in order to ensure that the twist of the canonical bundle be trivial on the upper levels, and to avoid non-reduced components in the contractions $\oC_i$. Indeed, ribbons appear when $\omega_C(\lambda_i)$ has positive degree on the support of $\lambda_i$. This happens precisely when at least one zero of order $m\geq1$ is contained in the support of $\lambda_i$. In this case, since we have a non-trivial logarithmic structure of marking type at the zero, we can subdivide the corresponding leg at level $i$; classically, this means sprouting a new semistable rational component at the marking. In the natural coordinates $[x_0:x_1]$ with respect to the two special points, the differential $\eta$ can be extended uniquely to the new component $\tilde{v}$ by setting $\eta_{\tilde{v}}=x_0^{m}\on{d}\!x_0$; the choice of a non-zero scalar is compensated by the automorphisms of the underlying curve. Notice that this differential does not contribute to the GRC, since $m+2>1$. The mere existence of the non-zero differentials at levels higher than $i$ guarantees that the twist of the canonical bundle by $\lambda_i$ will be trivial (not just numerically). We provide the following ad hoc example in the hope of acquainting the reader with the log modification procedure.

\begin{example}
    Let $(C,\eta)$ be a generalised multiscale differential, where $C$ consists of two components $C_0$ and $C_{-1}$ joined at a single node $q$. Assume that $C_0$ is a curve of genus two, and $\eta_0$ is a holomorphic differential with simple zeroes at $q$ and its conjugate point $\bar q$, which in particular is a marking of $C$ (note that $C$ is not hyperelliptic in the sense of admissible covers, although $C_0$ is; the specific $C_1$ will be immaterial for this discussion).  In a general one-parameter smoothing, $C_0$ will have negative self-intersection; in particular, it can be contracted by general principles (Artin's criterion). The resulting singularity is formally isomorphic to $\kfield[\![t^3,t^4,t^5]\!]$, which is not Gorenstein. Indeed, since $\omega_{C_0}=\OO_{C_0}(q+\bar q)$, twisting by a multiple of $C_0$ will never make the relative dualising bundle of the family trivial on $C_0$. Instead, we are going to modify $C$ by log blowing up $C_0$ at $\bar q$, and \emph{then} contract, which results into a locally planar singularity of type $A_5$, whose dualising bundle is generated by a meromorphic differential with poles of order three on either branch.
\end{example}

\subsection{Hyperelliptic differentials} The connected component consisting of hyperelliptic differentials is already irreducible \cite[Proposition 5.16]{Chen2}. This is proved by identifying the moduli space of hyperelliptic differentials with a moduli space of quadratic differentials on rational curves. We therefore view the following result as a first proof of concept for Conjecture \ref{conj}.

\begin{proposition}\label{prop:hypdiff}
 Let $(\psi\colon C\to P, \eta)$ be a log rubber hyperelliptic differential with tropicalisation $\blambda$. The differential $\eta_i$ at level $i$ descends to a generator of the dualising sheaf of the Gorenstein contraction associated to $(\psi\colon \widetilde{C}_i\to P,\lambda_i)$ as in \S\ref{sec:construction}.
\end{proposition}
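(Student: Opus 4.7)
The plan is to apply Theorem \ref{thm:construction} directly to the contraction datum $\lambda_{i,T}$, the level-$i$ truncation of the tropicalisation $\blambda_T$ of $\eta$. Since $\blambda_T$ satisfies the balancing equation \eqref{eqn:balancing} (by the very fact that $\eta$ is a log differential on the hyperelliptic admissible cover), a sign inversion shows $\lambda_{i,T}$ satisfies \eqref{eqn:balancing_neg} on its support $\Pcal_{>0}$, hence qualifies as a contraction datum. The theorem produces the Gorenstein contraction $\sigma_i\colon \widetilde C_i \to \oC_i$ with the key identification $\sigma_i^*\omega_{\oC_i} \cong \omega_{\widetilde C_i}(\lambda_i)$, together with the double cover $\opsi_i\colon \oC_i \to \oP_i$ satisfying $\omega_{\oC_i} = \opsi_i^*\OO_{\oP_i}(\mathbbm 1)$.

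Descent is then formal: the extended differential $\tilde\eta_i$ is by construction a global log section of $\omega_{\widetilde C_i}(\lambda_i) = \sigma_i^*\omega_{\oC_i}$, and since $R\sigma_{i,*}\OO_{\widetilde C_i} = \OO_{\oC_i}$ (as used in defining $\oC_i$), the projection formula gives $\sigma_{i,*}(\sigma_i^*\omega_{\oC_i}) = \omega_{\oC_i}$, so $\tilde\eta_i$ corresponds to a unique $\bar\eta_i \in H^0(\oC_i, \omega_{\oC_i})$ with $\sigma_i^*\bar\eta_i = \tilde\eta_i$.

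To verify $\bar\eta_i$ is a local generator of the line bundle $\omega_{\oC_i}$, I would exploit the hyperelliptic quotient. The anti-invariance $\iota^*\tilde\eta_i = -\tilde\eta_i$ implies that $\tilde\eta_i^2$ is $\iota$-invariant, so it descends through the construction of Section \ref{sec:algebra_structure} to a section $\bar\xi_i$ of $\OO_{\oP_i}(\mathbbm 2)$ with $\opsi_i^*\bar\xi_i = \bar\eta_i^2$. On every component $P_v$ contracted by $\tau_i$ (i.e., $v \in \on{Supp}(\lambda_{i,T})$), the balancing equation \eqref{eqn:balancing_neg} forces $\deg(L_i^{\otimes 2}|_{P_v}) = 0$, so $L_i^{\otimes 2}|_{P_v}$ is trivial and $\bar\xi_i|_{P_v}$ is a constant function, nonzero because $\tilde\eta_i$ is log-nowhere-vanishing and its square on the trivial bundle $\omega_{\widetilde C_i}(\lambda_i)^{\otimes 2}|_{C_v}$ is therefore a log unit. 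Hence $\bar\xi_i$ is a local generator of $\OO_{\oP_i}(\mathbbm 2)$ near each singular point of $\oP_i$, and pulling back via $\opsi_i$ shows $\bar\eta_i^2$ generates $\omega_{\oC_i}^{\otimes 2}$ locally at the corresponding singular points of $\oC_i$; since $\omega_{\oC_i}$ is a line bundle, the same holds for $\bar\eta_i$. Away from the exceptional locus, $\sigma_i$ is an isomorphism and the claim is immediate from log-nonvanishing of $\tilde\eta_i$.

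The main obstacle is confirming that $\tilde\eta_i^2$ really restricts to a \emph{nonzero} constant on each contracted component, i.e., that the level-$i$ rescaling defining $\tilde\eta_i$ (together with the $x_0^m\,dx_0$ extension on the sprouted bubbles of $\widetilde C_i$) genuinely produces a log unit on $\widetilde C_{>i}$ rather than degenerating to zero. A more hands-on alternative would be to match the local expression of $\tilde\eta_i$ around each singular point against the explicit generator $\frac{ds_1}{us_1} - \sum_{j=2}^{\ell}\frac{ds_j}{u_j s_j}$ from Section \ref{sec:local}, verifying via Corollary \ref{cor:conductor} that the prescribed pole orders on the branches of the normalisation agree with those of $\tilde\eta_i$; the matching is precisely encoded by \eqref{eqn:balancing_neg}.
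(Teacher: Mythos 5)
Your route diverges from the paper's. The paper does not descend $\tilde\eta_i$ directly along $\sigma_i$: instead, since $\eta_i$ is $\iota$-anti-invariant, it pushes $\eta_i$ forward along $\psi$ to a section of the odd part of $\psi_*\omega_C(\lambda_i)$ over $P_{\leq i}$, identifies this with $L|_{P_{\leq i}}$, and then extends \emph{uniquely} to a section of $L$ on all of $P$ using the triviality of $L$ on $P_{>i}$; that extension is identified with a section of $\OO_{\oP}(\mathbbm 1)$, whose pullback along $\opsi$ is the desired anti-invariant section of $\omega_{\oC}$. For the local-generator claim the paper is brief and direct: the pole orders of $\eta_i$ along the branches of the normalisation saturate the twist computed in Corollary~\ref{cor:conductor}, which forces generation.

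Two concrete problems with your version. First, the parenthetical ``as used in defining $\oC_i$'' attached to $R\sigma_{i,*}\OO_{\widetilde C_i} = \OO_{\oC_i}$ is a misattribution: the paper is explicit that $\OO_{\oC}$ is \emph{not} constructed via $\sigma$ — the entire point of \S\ref{sec:construction} is to avoid that problematic strategy, defining $\oC$ as a double cover of $\oP$ and only afterward producing a map $\sigma$ covering $\tau$. The identity $\sigma_{i,*}\OO_{\widetilde C_i} = \OO_{\oC_i}$ is not free (and fails outright when $\oC_i$ is non-reduced, which is part of why $\widetilde{C}_i$ replaces $C$); if you want to descend along $\sigma_i$ you must argue for it, or else descend along $\psi$ and $\tau$ as the paper does, where the needed facts ($\tau_*\OO_P = \OO_{\oP}$, triviality of $L$ on $P_{>i}$) are already in hand. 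Second, you have correctly flagged the genuine gap in your squaring argument: you need the descent $\bar\xi_i$ of $\tilde\eta_i^2$ to be a nonvanishing constant on each contracted component, and as written you only assert this. The paper sidesteps the issue by comparing pole orders against Corollary~\ref{cor:conductor}; your ``more hands-on alternative'' in the final paragraph is essentially the paper's actual argument and is the one that actually closes.
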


 
 \begin{proof}
 Let $\eta_i$ denote the collection of differentials on components at level $\leq i$. Then $\eta_i$ is a section of the restriction of $\omega_C(\lambda_i)$ to $C_{\leq i}$, i.e. a meromorphic differential with poles along the level $[i,i+1]$-nodes, whose order of pole is determined by the slopes of $\blambda$ plus one. Since $\eta_i$ is $\iota$-anti-invariant, it descends to a section of the odd part of $\psi_*\omega_C(\lambda_i)$ on $P_{\leq i}$, which is the restriction of $L$. Since the latter is trivial on $P_{>i}$, this section extends uniquely to $P$. We can therefore identify it with a section of $\OO_{\oP}(\mathbbm 1)$ on $\oP$, and in turn with an anti-invariant section of $\omega_{\oC}$. It is in fact a local generator because it has maximal vanishing order, c.f. Corollary \ref{cor:conductor}.
\end{proof}

\begin{remark}
    Anti-invariance under $\iota$ implies that residues at conjugate (resp. Weierstrass) points are opposite (resp. zero). In particular, $\iota$-anti-invariance implies that the Global Residue Condition holds.
    Although every holomorphic differential on a hyperelliptic curve is $\iota$-anti-invariant (Remark \ref{rmk:antiinvariant2}) (and so are their limits), generalised multiscale differentials are only meromorphic on lower levels of the curve, hence
    the above is a proof of Conjecture \ref{conj} not for any differential on a hyperelliptic curve $C$ (in the sense of admissible covers), but only for the $\iota$-anti-invariant ones. See Example \ref{exa:nonhyp}.
\end{remark}
\begin{example}
    Let $C$ be a nodal curve consisting of two hyperelliptic components $C_0$, of genus $g_0$, and $C_{-1}$, joined at a single node $q$, which is Weierstrass on both. Let $\eta$ be an anti-invariant multiscale differential on $C$, such that $\eta_0$ has a single zero of multiplicity $2g_0-2$ at $q$. Then $\lambda$ has slope $2g_0-1$ along the corresponding edge. The meromorphic differential $\eta_1$ has a pole of order $2g_0$ at $q$; notice that the GRC is automatically satisfied by the Residue Theorem. The contraction $\oC_{-1}$ has an $A_{2g_0}$-singularity (of genus $g_0$) at $q$, and $\eta_{-1}\approx \frac{\on{d}\!t}{t^{2g_0}}$ descends to a generator of $\omega_{\oC_{-1}}$.
\end{example}

\begin{example}\label{exa:nonhyp}
    Let $(C,\eta)$ be a genus $3$ hyperelliptic multiscale differential whose level graph is the following:

 \begin{figure}[h]
  \begin{tikzpicture}
   \draw (-1,1)--(0,-.2)--(1,1)--(0,.2)--(-1,1);
   \draw[fill=white]  (-1,1)node{\tiny$1$} circle (4pt) (1,1)node{\tiny$1$} circle (4pt) (0,-.2) circle (4pt) (0,.2) circle (4pt);
   \draw[-stealth] (1.5,.5)--node[above]{$\on{trop}(\psi)$} (3,.5);
  \draw (3.5,1)--node[below left]{\color{sebgreen}{$1$}}(4.5,0)--node[below right]{\color{sebgreen}{$1$}}(5.5,1)
  (4.3,-.5)--node[left]{{$1$}}(4.5,0)--node[right]{{$1$}}(4.7,-.5);
  \draw[sebblue] (3.5,1)--node[right]{$4$}(3.5,1.5) (5.5,1)--node[right]{$4$}(5.5,1.5);
  \draw[fill=white] (3.5,1) circle (4pt)  (5.5,1) circle (4pt)  (4.5,0) circle (4pt);
  \draw[-stealth] (6,.5)--node[above]{\color{sebgreen}{$\lambda_T$}} (7.5,.5);

  \draw (7.9,0)--(8.1,0) node[right]{$-1$}
  (7.9,1)--(8.1,1) node[right]{$0$};
  \draw[-stealth] (8,-.5)--(8,1.5);
  \end{tikzpicture}
 \end{figure}
Let $\eta$ restrict to $\on{d}\!z$ on the two elliptic curves. Choose coordinates on the rational curves $R_i$ at level $-1$ in such a way that the nodes are $0$ and $\infty$, and the zeroes $a$ and $b$. Then $\eta$ restricts to:
\[\alpha_i(t-a)(t-b)\frac{\on{d}\!t}{t^2}\]
on the rational curve $R_i,\ i=1,2.$ Here, $\iota$-anti-invariance forces $\alpha_1=-\alpha_2$. Contracting the subcurves at level $0$ we obtain two rational curves joined at two tacnodes. There is a linear condition for a meromorphic differential with poles of order two on the pointed normalisation to descend to the tacnode (c.f. \cite[\S 2.2]{Smyth}) which is analogous to the condition $\alpha_1=-\alpha_2$ from above. On the other hand, any choice of $\alpha_i$ gives rise to a generalised multiscale differential.
\begin{remark}
    If $a=-b$, the residues are zero. By varying the $\alpha_i$, we thus get an example of a multiscale differential which satisfies the GRC but is not anti-invariant. This will be the limit of differentials on smooth, non-hyperelliptic curves. There are of course even more examples of non-hyperelliptic differentials on a hyperelliptic curve if we do not impose that the sets of zeroes and poles are invariant under the hyperelliptic involution.
\end{remark}
\end{example}

\bibliographystyle{alpha}
\bibliography{Bibliography.bib}


\end{document}